\documentclass[12pt]{amsart}

\usepackage{amsmath,amssymb,amsthm,xypic,epsfig }
\usepackage{enumerate}
\usepackage{color}
\usepackage{bbm}
\usepackage{appendix}
\usepackage[margin=30truemm]{geometry}
\usepackage[colorinlistoftodos]{todonotes}
\usepackage{version}
\usepackage{tikz}
\usepackage{enumitem}
\usepackage{setspace}
\usetikzlibrary{shapes.geometric, shapes.misc}
\DeclareFontFamily{OT1}{rsfs}{}
\DeclareFontShape{OT1}{rsfs}{n}{it}{<-> rsfs10}{}
\DeclareMathAlphabet{\curly}{OT1}{rsfs}{n}{it}

\newtheorem{Thm}{Theorem}[section]
\newtheorem{lem}[Thm]{Lemma}

\newtheorem{cor}[Thm]{Corollary}

\newtheorem{prop}[Thm]{Proposition}

\theoremstyle{remark}
\newtheorem{Rem}[Thm]{Remark}
\newtheorem{ex}[Thm]{Example}
\theoremstyle{definition}
\newtheorem{defn}[Thm]{Definition}

\newcommand{\Spec}{\mathop{\mathrm{Spec}}\nolimits}

\newcommand{\Hom}{\mathop{\mathrm{Hom}}\nolimits}
\newcommand{\Ext}{\mathop{\mathrm{Ext}}\nolimits}

\newcommand{\Proj}{\mathop{\mathrm{Proj}}\nolimits}

\newcommand{\ord}{\mathop{\mathrm{ord}}\nolimits}
\newcommand{\gr}{\mathop{\mathrm{gr}}\nolimits}

\newcommand{\sHom}{\mathop{\mathcal{H}om}\nolimits}

\newcommand{\Image}{\mathop{\mathrm{Im}}\nolimits}

\newcommand{\length}{\mathop{\mathrm{length}}\nolimits}

\newcommand{\Val}{\mathop{\mathrm{Val}}\nolimits}

\def\Sym{\operatorname{Sym}}

\def\rank{\operatorname{rank}}
\def\height{\operatorname{ht}}
\def\dim{\operatorname{dim}}

\def\Rees{\operatorname{Rees}}

\def\vol{\operatorname{vol}}
\def\deg{\operatorname{deg}}

\excludeversion{memo}
\allowdisplaybreaks
\begin{document}

\title[Algebraic tangent cone of torsion-free sheaves via valuations]{Optimal algebraic tangent cone of torsion--free sheaves via valuations}
\author{Yohei Hada}

\maketitle
\thispagestyle{empty}

\date{\today}

%\setcounter{tocdepth}{4} 
%\noindent
\tableofcontents

%%%abstract%%%
\begin{abstract}
We develop a valuation-theoretic framework for studying tangent cones of torsion--free sheaves on algebraic varieties. To analyze these objects, we introduce a slope stability theory, including the Harder--Narasimhan filtrations, for finitely generated $\mathbb{R}$--graded modules over finitely generated $\mathbb{R}_{\geq 0}$-graded algebras. Using it, we show that there is a canonically determined tangent cone of torsion--free sheaves, up to the expected equivalence ambiguity, for quasi-regular valuations, which generalize Chen--Sun \cite{CS_2018}.
\end{abstract}
%%%%%%%%%%

\section{Introduction}\label{sec:intro}
In this paper, we introduce slope stability of finitely generated $\mathbb{R}$-graded modules on finitely generated $\mathbb{R}_{\geq 0}$-graded algebras over an algebraically closed field $\mathbbm{k}$. Then we apply it to show the existence and uniqueness of optimal algebraic tangent cones of torsion-free sheaves, which we define, along quasi--regular valuations centred at a closed point on algebraic varieties. It extends the notion of the optimal algebraic tangent cone introduced by Chen--Sun \cite{CS_2018} far beyond the blow--up valuations, i.e. $\ord_E$ for $E$ is the exceptional divisor of blow--up at smooth point, treated in their work.

The starting point of our work is the series of deep results by Chen--Sun \cite{CS_2018},\cite{CS_20},\cite{CS_analitic_tc},\cite{CS_21} on local structures of reflexive sheaves equipped with admissible HYM connections on K\"{a}hler manifolds. The notion of admissible HYM connection is introduced by Bando--Siu\cite{Bando1994STABLESA}: 
\begin{defn}
    Let $(X,\omega)$ be a K\"{a}hler manifold and $E$ be a reflexive sheaf on $X$. Let $S\subset X$ be the non--locally--free locus of $E$.
    \begin{enumerate}
        \item An {\bf admissible metric} on $E$ is a metric $h$ on $E|_{X-S}$ such that for any $x\in X$, there is an open neighbourhood $U$ of $x$ with following properties:
        \begin{itemize}
            \item The curvature tensor $F_h$ is square integrable on $U-S$.
            \item The trace of the curvature tensor $\sqrt{-1}\Lambda F_h$ is bounded on $U-S$.
        \end{itemize}
        \item An admissible metric on $E$ is called {\bf admissible Hermitian--Einstein} (or admissible HE metric for short) if there is a constant $\lambda\in \mathbb{R}$ with $\sqrt{-1}\Lambda F_h=\lambda\cdot I_E$, where $I_E$ is the identity morphism of $E$.
        \item The Chern connection of an admissible HE metric is called {\bf admissible Hermitian--Yang--Mills connection} (or admissible HYM connection for short).
    \end{enumerate}
\end{defn}
Admissible metrics always exists for reflexive sheaves on K\"{a}hler manifolds. It is proven in Bando--Siu\cite{Bando1994STABLESA} that the slope polystability of reflexive sheaves is equivalent to the existence of an admissible HE metric (Kobayashi--Hitchin correspondence).

In Chen-Sun's work, they introduced the notion of the analytic tangent cone of an admissible HYM connection. Let $E$ be a reflexive sheaf on $B=B(1)=\{z\in \mathbb{C}^n\mid \|z\|<1\}$ which has only $1$ point singularity $0$, and $A$ be an admissible HYM connection of $E$. For $\lambda>0$, we have $\lambda: B(\lambda^{-1})\to B; z\mapsto \lambda\cdot z$ and $\lambda^*A=:A_\lambda$. By Uhlenbeck's compactness, we have a subsequence $(\lambda_i)$ and a sequence of gauge transforms $(g_i)$ such that $\lambda_i\to 0$ and $g_i^*A_{\lambda_i}$ converges to some HYM connection on $(\mathbb{C}^n-\{0\})-\Sigma$, where $\Sigma$ is an analytic subset of $\mathbb{C}^n-\{0\}$ invariant under $\mathbb{C}^*$-action and has locally finite Hausdorff measure of real codimension $4$. By the removable singularity theorem of Bando\cite{199161}, we have a reflexive sheaf $E_\infty$ and an admissible HYM connection $A_\infty$ on $(\mathbb{C}^n,\omega_0)$, where $\omega_0$ is the Euclidean metric on $\mathbb{C}^n$. By passing further subsequence, Yang--Mills energy of $A_\lambda$ converges to a Radon measure of type $|F_{A_\infty}|^2d\vol_{\omega_0}+8\pi^2\nu$, where $\nu$ is supported in $\Sigma\cup\{0\}$. $\nu$ induces a $\mathbb{C}^*$-equivariant cycle on $\mathbb{C}^n$, which is written as the cone of the codimension 2 cycle $\Sigma_b$ on $\mathbb{P}^{n-1}$. 

On the other hand, let $E$ be a reflexive sheaf on $B=B(1)$ and $\pi: \hat{B}\to B$ be a blow--up at $0$. Chen--Sun \cite{CS_2018} defined extensions of $E$ as a reflexive sheaf $\hat{E}$ on $\hat{B}$ with an isomorphism to $\pi^*E|_{\hat{B}-D}$ on $\hat{B}-D$, where $D\subset \hat{B}$ is the exceptional divisor, and called $\hat{E}|_D$ an algebraic tangent cone. Furthermore, they defined the functional $\Phi$ from all extensions as $\Phi(\hat{E})=\mu_{\max}(\hat{E}|_D)-\mu_{\min}(\hat{E}|_D)$, where $\mu$ is the slope of sheaves on the polarized variety $(D,N_{D/\hat{B}})$. Chen--Sun \cite{CS_2018} showed that there exists an extension $\hat{E}$ with $\Phi(\hat{E})\in [0,1)$, which is called an optimal extension, and the pull--back of the graded sheaf of HNF of optimal extensions $\gr^{HNF}(\hat{E}|_D)$ to the cone of $\mathbb{P}^{n-1}\cong D$ do not depend the choice of optimal extensions. 

Furthermore, Chen--Sun \cite{CS_21} showed that $E_\infty$ is in fact the reflexive hull of the pull--back to the cone of $D$ of the graded sheaf of Harder--Narasimhan and Jordan--H\"{o}lder filtration (or Harder--Narasimhan--Seshadri filtration) $\gr^{HNS}(\hat{E}|_D)$ of the optimal extension and $\Sigma_b$ is the cycle induced by the cokernel of the reflexive hull of the above sheaf.

The purpose of this paper is to extend the theory of algebraic tangent cones by Chen--Sun to a broader valuation--theoretic framework.

To any finitely generated valuation $v: R\to \mathbb{R}_{\geq 0}\cup \{\infty\}$ over a finitely generated integral domain $R$ over a field $\mathbbm{k}$, we can associate a multi--graded $\mathbb{G}_m^r$-equivariant degeneration $\pi: \mathfrak{X}\to T$ where $T$ is a toric variety of $\dim(T)=r$ and $r$ is the rank of $v$. Let $M(v)\subset\mathbb{R}$ be the additive subgroup generated by the sub-semigroup $\Gamma:=v(R-\{0\})\subset \mathbb{R}$. As an abelian group, $M(v)$ is isomorphic to $\mathbb{Z}^r$. Set $M(v)_{\mathbb{R}}=M(v)\otimes_\mathbb{Z}\mathbb{R}$ and $N(v)=\Hom_{\mathbb{Z}}(M(v),\mathbb{Z})$. Let $\sigma\subset N(v)_{\mathbb{R}}:=N(v)\otimes_{\mathbb{Z}}\mathbb{R}$ be the cone defined by 
\[
\sigma=\{\phi\in N(v)_{\mathbb{R}}\mid \left<\phi,\gamma\right>\geq 0, \forall \gamma\in \Gamma\}.
\]
Then the affine toric variety $T(\sigma)$ associated by $\sigma\subset N(v)$ is $\Spec(\mathbbm{k}[\sigma^\lor \cap M(v)])$. Note that $\sigma^\lor\cap M(v)$ is the saturation of subsemigroup $\Gamma\subset M(v)$, i.e. $\gamma\in \sigma^\lor \cap M(v)$ iff $\gamma\in M(v)$ and $k\gamma\in \Gamma$ for some $k\in \mathbb{Z}_{>0}$. Thus $\Gamma^s:=\sigma^\lor \cap M(v)$ can be viewed as a sub--semigroup of $\mathbb{R}_{\geq 0}$. Let $\mathcal{R}$ be a $\mathbbm{k}[\Gamma^s]$-algebra defined by
\[
\mathcal{R}:=\bigoplus_{\gamma\in M(v)}\mathfrak{a}_\gamma(v)t^{-\gamma},
\]
where $\mathfrak{a}_\gamma(v):=\{f\in R\mid v(f)\geq \gamma\}$. Then $\mathcal{R}$ is finitely generated $\mathbbm{k}[\Gamma^s]$-algebra and thus we have 
\[
\pi: \mathfrak{X}:=\Spec(\mathcal{R})\to T(\sigma).
\]
Now, $\mathfrak{X}$ has the natural $(\mathbb{G}_m)^r$-action, $\pi$ is $(\mathbb{G}_m)^r$-equivariant, and restriction of $\pi$ to $\pi^{-1}((\mathbb{G}_m)^r)$ is $(\mathbb{G}_m)^r$-equivariantly isomorphic to $\Pr_2:\Spec(R)\times (\mathbb{G}_m)^r\to (\mathbb{G}_m)^r$. Note that for $w\in N(v)\cap \sigma$, $w$ determines a family $\pi_w: \mathfrak{X}_w\to \mathbb{A}^1$ by defining
\[
\mathfrak{X}_w=\Spec\left(\bigoplus_{\gamma\in \Gamma^s}\mathfrak{a}_s(v)t^{-w(s)}\right)\to \mathbb{A}^1_t.
\]
There is the canonical element $\xi$ in $N(v)_{\mathbb{R}}$ defined by the inclusion $M(v)\to \mathbb{R}$. This vector $\xi$ is called the {\bf Reeb vector} of $v$. Note also that the pair $(\pi:\mathfrak{X}\to T(\sigma),\xi)$ completely determines $v$.

From this perspective, we consider degenerations of torsion-free sheaves on $X=\Spec(R)$ along finitely generated valuations centred at a closed point $x\in X$. For a torsion-free sheaf $E=M^\sim$ on $X$, we can pull back $E$ to $\mathfrak{X}|_{(\mathbb{G}_m)^r}$ via the composition of the natural isomorphism $\mathfrak{X}|_{(\mathbb{G}_m)^r}\cong X\times (\mathbb{G}_m)^r$ and the first projection. We define degenerations of $E$ along $v$ as $(\mathbb{G}_m)^r$-equivariant extensions of this sheaf to torsion-free sheaves on $\mathfrak{X}$. This definition is consistent with the definition of extensions of $E$ in \cite{CS_2018}. By the weight decomposition of the $(\mathbb{G}_m)^r$-action, it corresponds to the geometric $v$-valuation on $M$ defined in the Definition \ref{v_valuative_functions}. 

However, extension is not unique since we extend modules along divisors. Thus, we have a question: Is there a canonical extension for given modules along finitely generated valuations?

In the case of the blow--up valuation on smooth points, the above-mentioned optimal extension of Chen--Sun \cite{CS_2018} answers this question and \cite{CS_21} shows that it actually corresponds to the limit of rescalings of admissible HYM connections. In this paper, we extend the notion of optimal extensions to quasi--regular valuations centred at closed points on affine varieties and show that there is a unique optimal extension in the following form (see \ref{main_theorem}):
\begin{Thm}
    Let $R$ be an affine domain over an algebraically closed field $\mathbbm{k}$ and $v\in \Val_{x,X}$ be a quasi--regular valuation centred at a closed point $x\in X:=\Spec(R)$ with index $\delta$, i.e. $v(K(R)-\{0\})=\delta\cdot \mathbb{Z}$. For any torsion--free $R$--module $M$, there is a sheaf $\hat{E}$ on $\mathfrak{X}=\Spec(\Rees_v(R))$ extending $M^\sim$ on $\Spec(R)$ $\mathbb{G}_m$-equivariantly such that 
    \[
    \Phi(\hat{E}|_{\mathfrak{X}_0}):=\mu_{\max}(\hat{E}|_{\mathfrak{X}_0})-\mu_{\min}(\hat{E}|_{\mathfrak{X}_0})<\delta.
    \]
    Furthermore, the graded module of the Harder--Narasimhan filtration of $\hat{E}|_{\mathfrak{X}_0}$ satisfying the above inequality is unique up to change of grading of each direct summand, where the Harder-Narasimhan filtration of graded module is defined in \ref{HNF} and $\mu_{\max},\mu_{\min}$ are defined as the slope (defined in \ref{slope}) of $\underline{M_1}$ and $\underline{M}/\underline{M_{l-1}}$, $(\underline{M_i})_{i=1}^l$ is the Harder-Narasimhan filtration of $\underline{M}$, the graded module corresponding to the $\mathbb{G}_m$-equivariant sheaf $\hat{E}|_{\mathfrak{X}_0}$.
\end{Thm}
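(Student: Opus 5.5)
The plan follows Chen--Sun's strategy for the blow--up valuation, transplanted to the graded setting. Throughout, extensions are identified with $v$-valuations (filtrations) on $M$, as in Definition \ref{v_valuative_functions}.

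\textbf{Step 1: extensions as filtrations.} A $\mathbb{G}_m$-equivariant torsion--free extension $\hat{E}$ of $M^\sim$ corresponds to a decreasing filtration $\mathcal{F}^\lambda M$, indexed by $\lambda\in\delta\mathbb{Z}$ (up to a shift), compatible with $\mathfrak{a}_\gamma(v)$. The central fibre is then
\[
\underline{M}=\bigoplus_\lambda \mathcal{F}^\lambda M/\mathcal{F}^{\lambda+\delta}M,
\]
a finitely generated graded module over $\gr_v R=\Rees_v(R)/t^{-\delta}\Rees_v(R)$.

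\textbf{Step 2: elementary modification.} Given such a filtration, let $\underline{M_1}\subset\underline{M}$ be the maximal destabilizing graded submodule from the HN filtration \ref{HNF}. Pulling $\underline{M_1}$ back gives a new filtration
\[
\mathcal{F}'^\lambda=\mathcal{F}^{\lambda+\delta}+\{m\in\mathcal{F}^\lambda:\bar m\in \underline{M_1}\}.
\]
This is the analogue of Chen--Sun's modification along the divisor, i.e. shifting the grading of $\underline{M_1}$ by $\delta$. The new central fibre sits in an exact sequence
\[
0\to \underline{M}/\underline{M_1}\to \underline{M}'\to \underline{M_1}(\delta)\to 0.
\]
Here twisting by $\delta$ changes the slope by exactly $\delta$; this requires the slope \ref{slope} to be additive in the grading shift. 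That additivity comes from the Reeb vector normalization and is where the index $\delta$ enters.

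\textbf{Step 3: termination.} Define an invariant, for instance $\mu_{\max}$ together with the rank of the maximal destabilizing piece, ordered lexicographically. One then shows that each modification strictly decreases it until $\Phi<\delta$. Termination needs boundedness, which I would obtain as follows:
\begin{itemize}
\item the slopes of graded submodules form a discrete set, since the degrees lie in $\delta\mathbb{Z}$ and ranks are bounded;
\item $\mu_{\min}$ is bounded below, because $\mathcal{F}$ is sandwiched between shifts of the $\mathfrak{a}$-adic filtrations of a free module containing $M$ and of one contained in it.
\end{itemize}
Existence then follows: if $\Phi\ge\delta$, modifying along the part of the HN filtration with slope $>\mu_{\min}+\delta$ (or iterating over $\underline{M_1}$) lowers the top slope without creating slopes below the old minimum.

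\textbf{Step 4: uniqueness.} Given two optimal filtrations $\mathcal{F},\mathcal{G}$, I would compare them via the relative position of the lattices. Choose the minimal shift $c$ with $\mathcal{G}^{\lambda}\subset\mathcal{F}^{\lambda-c}$. This gives a nonzero graded morphism $\underline{M}_{\mathcal{G}}\to\underline{M}_{\mathcal{F}}(c)$. Slope inequalities for morphisms between semistable graded modules then force compatibility of the HN pieces. Combined with $\Phi<\delta$, which says all slopes fit in a window of length $<\delta$, this shows the HN filtrations of the two central fibres match piece by piece up to degree shifts of each summand. Concretely, I would run an induction on the number of HN pieces, applying the argument first to the maximal-slope parts, as in Chen--Sun \cite{CS_2018}.

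\textbf{Main obstacle.} I expect Step 4 to be the hardest. Over a general $\gr_v R$, which is not a polynomial ring, one needs:
\begin{itemize}
\item the standard properties of HN filtrations: functoriality, $\Hom(\underline{A},\underline{B})=0$ when $\mu_{\min}(\underline{A})>\mu_{\max}(\underline{B})$, and behaviour under shifts;
\item that passing to the associated graded via the comparison map preserves rank and degree.
\end{itemize}
Verifying the latter for non-regular $\gr_v R$ is the most delicate point. There I would rely on flatness of $\Rees_v(R)$ over $\mathbbm{k}[t^{-\delta}]$ and torsion-freeness of the extension.
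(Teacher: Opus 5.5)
Your Steps 1--3 follow the paper's existence proof: Hecke transform along the maximal destabilizing piece $\underline{M_1}$, plus discreteness of slopes. The paper shows $\Phi$ itself strictly drops (Proposition \ref{descend_the_energy}). In your normalization, $\mu_{\max}$ strictly drops and $\mu_{\min}$ never drops while $\Phi\ge\delta$, so your ``sandwich'' lower bound is unnecessary (and is not uniform along the sequence anyway). Minor points:
\begin{itemize}
\item With the paper's convention $M(\lambda)_t=M_{\lambda+t}$, one has $\mu(M(\lambda))=\mu(M)+\lambda$ directly from $\dim M(\lambda)_{\le x}=\dim M_{\le x+\lambda}$; the Reeb vector plays no role.
\item The correct sequence is $0\to(\underline M/\underline{M_1})(\delta)\to\underline{M}'\to\underline{M_1}\to0$. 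Yours agrees only after an overall regrading, with your $(\delta)$ meaning the paper's $(-\delta)$.
\item Degrees lie in a fixed lattice, not in $\delta\mathbb Z$: for $\mathbbm{k}[x,y]$ with weights $1,2$, $\deg R(-1)=-1/2$.
\end{itemize}

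The genuine gap is Step 4. A one-sided comparison (minimal $c$ with $\mathcal G^\lambda\subset\mathcal F^{\lambda-c}$) exists for any two extensions, optimal or not. The resulting map $\underline M_{\mathcal G}\to\underline M_{\mathcal F}(c)$ is neither injective nor surjective. It gives only one inequality, between $\mu_{\min}$ of the source and $\mu_{\max}$ of the target, and nothing about HN pieces. Your ``induction on the number of HN pieces'' also has nothing to run on: HN pieces of the central fibre are not submodules of $M$, and the map may kill the maximal-slope part.

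What you are missing is to normalize in both directions and bound the distance between the two lattices. Shift so that $\tilde\rho:\tilde E\to\tilde E'$ and $\tilde\rho^{-1}:\tilde E'\to\tilde E(lD)$ are both nonzero on $D$. Their composite is $t^l$, i.e. $v_M\le v'_M\le v_M+l$. The two nonzero central-fibre maps $\underline M\to\underline{M'}$ and $\underline{M'}\to\underline M(-l)$ give $\mu_{\min}\le\mu'_{\max}$ and $\mu'_{\min}\le\mu_{\max}-l$. Hence $l\le\Phi(v_M)+\Phi(v_M')<2\delta$, so $l\in\{0,\delta\}$.
\begin{itemize}
\item For $l=0$ the two functions differ by a constant.
\item For $l=\delta$, $v_M$ is the Hecke transform of $v'_M$ along $\underline N=\Image(\tilde\rho|_D)=\ker(\tilde\rho^{-1}|_D)$. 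Take $k'$ maximal with $\tilde\rho^{-1}|_D(\underline{M'_{k'}})=0$, so $\underline{M'_{k'}}\subset\underline N$. If $\underline N\not\subset\underline{M'_{k'}}$, you would get $\mu_{\min}\le\mu'_{k'+1}\le\mu_{\max}-\delta$, contradicting $\Phi(v_M)<\delta$. So $\underline N=\underline{M'_{k'}}$ is an HN piece.
\item Finally, consider $0\to(\underline{M'}/\underline{M'_{k'}})(\delta)\to\underline M\to\underline{M'_{k'}}\to0$. Every HN slope of the sub is $\ge\mu'_{\min}+\delta>\mu'_{\max}$, which bounds every HN slope of the quotient. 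So the HN filtration of $\underline M$ is the concatenation, and the HN graded modules agree up to regrading summands.
\end{itemize}
The bound $l<2\delta$, reducing the comparison to a single Hecke transform along an HN piece, is what your Step 4 lacks. The obstacle you flag (rank and degree under the comparison map, flatness of the Rees algebra over $\mathbbm{k}[t]$, which is automatic from torsion-freeness) is not where the difficulty lies.
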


The paper is organized as follows. In Section 2, we prove the asymptotic Riemann--Roch type theorem for finitely generated $\mathbb{R}$-graded modules over finitely generated $\mathbb{R}_{\geq 0}$-graded algebras over $\mathbbm{k}$ and introduce the stability notion of these modules. In Section 3, we define the notion of $v$-valuation, which corresponds to (and generalize) the notion of extension in Chen--Sun \cite{CS_2018}, and prove the main theorem. In Section 4, we provide an explicit example that lies outside the original framework of Chen--Sun.

Throughout this paper, we fix a base field $\mathbbm{k}$, and all algebras, schemes, and morphisms are defined over 
$\mathbbm{k}$ unless otherwise stated. Also, we assume that $\mathbbm{k}$ is algebraically closed.
\section{Preliminaries}
Let $\Gamma\subset \mathbb{R}_{\geq 0}$ be a finitely generated sub-semigroup, and $R=\bigoplus_{\gamma\in \Gamma}R_\gamma$ be a finitely generated graded domain with $R_0=\mathbbm{k}$. Let $M=\bigoplus_{\lambda\in \mathbb{R}}M_\lambda$ is finitely generated $\mathbb{R}$-graded $R$-module. The goal of this section is to establish the stability notion of $M$, which is equivalent to the $\mu$-stability when $\Gamma=\mathbb{Z}_{\geq 0}$, $\Lambda=\mathbb{Z}$.

Before doing this, we define some notations: 
\begin{defn}
    Let $R,M$ be as above. For $\lambda\in \mathbb{R}$, define the new graded $R$-module $M(\lambda)$ as $M(\lambda)_t=M_{\lambda+t}$. 
\end{defn}
\subsection{Asymptotic Riemann-Roch theorem}
In this subsection, we establish a Riemann-Roch type formula for our setting. 
\begin{Thm}[Riemann-Roch]\label{generalized_RR}
Let $R=\bigoplus_{\gamma\in \Gamma}R_\gamma$ be a $n$-dimensional finitely generated domain over $\mathbbm{k}$, and $M=\bigoplus_{\lambda\in \Lambda}M_\lambda$ be a finitely generated graded $R$-module. There exists some constant $a_n$, $a_{n-1}\in \mathbb{R}$ which satisfies the following properties:
\begin{align*}
    &\dim(M_{\leq x})=\frac{a_n(M)}{n!}x^n+O(x^{n-1})\\
    &\lim_{T\to \infty}\frac{1}{T^n}\int_1^T\left(\dim(M_{\leq x})-\frac{a_n(M)}{n!}x^n-\frac{a_{n-1}(M)}{(n-1)!}x^{n-1}\right)dx=0,
\end{align*}
where $M_{\leq x}:=\bigoplus_{\lambda\leq x}M_\lambda$. Moreover, $a_n(M)=\rank(M)a_n(R)$.
\end{Thm}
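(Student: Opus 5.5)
Since $R$ is a finitely generated $\Gamma$-graded domain, I will first reduce to the case of a polynomial ring. Choose homogeneous generators $f_1,\dots,f_m$ of $R$ with degrees $d_i=\deg f_i>0$. Because $R_0=\mathbbm{k}$, every $d_i$ is positive, and then each $M_{\leq x}$ is finite dimensional. By graded Noether normalization, adapted to $\mathbb{R}$-gradings through a perturbation argument, I will choose homogeneous elements $y_1,\dots,y_n$ with $R$ finite over $S=\mathbbm{k}[y_1,\dots,y_n]$. If that is too delicate, I will work directly with $P=\mathbbm{k}[T_1,\dots,T_m]\to R$ instead.

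Next I will take a graded free resolution of $M$ over a polynomial ring $S$ with real weights. By Hilbert's syzygy theorem this resolution is finite, with shifted free modules $S(-\lambda_{ij})$. Then $\dim M_{\leq x}=\sum_{i,j}(-1)^i\dim S_{\leq x-\lambda_{ij}}$, so the whole problem reduces to $S=\mathbbm{k}[y_1,\dots,y_n]$ with positive real weights $e_k$ (if I work over $P$, the ring has more variables, and I address the dimension drop below). For $S$, $\dim S_{\leq x}$ counts the lattice points of $\mathbb{Z}_{\geq0}^n$ in the simplex $\{\sum e_k a_k\le x\}$. This is a lattice count in a dilated rational-or-irrational simplex, so it equals $\frac{x^n}{n!\prod e_k}+\frac{c\,x^{n-1}}{(n-1)!}+O(x^{n-2})$ in some averaged sense.

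For generic irrational $e_k$ the second term holds only on average, which explains why the statement uses the Cesàro integral. I will compute the average directly by Laplace transform. The series $\sum_\lambda \dim M_\lambda e^{-s\lambda}=s\int_0^\infty \dim M_{\leq x}e^{-sx}dx$ is a signed sum of $e^{-s\lambda_{ij}}/\prod_k(1-e^{-se_k})$. Expanding this in Laurent series at $s=0$ gives leading terms $a_n s^{-n}+a_{n-1}s^{-(n-1)}+O(s^{-(n-2)})$. A Tauberian argument, or direct integration of the simplex volume with boundary corrections, will convert this into the stated integral asymptotic. The main obstacle is proving the $O(x^{n-1})$ bound pointwise together with the averaged second-order term. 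I will handle this by bounding lattice-point counts between volumes of slightly shifted simplices, $\mathrm{vol}(\Delta_{x-C})\le\#\le\mathrm{vol}(\Delta_{x+C})$, which yields the pointwise error. The second term then follows by integrating, because the fluctuations in the count are bounded by $O(x^{n-2})$ after integration.

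If I use $P$ instead of Noether normalization, the Hilbert-series argument still works, but I must show that the pole order at $s=0$ equals $\dim M\le n$. That requires relating the order of pole to Krull dimension, which I would rather avoid; so I will insist on the finite map $S\to R$. Here I will use that in the $\mathbb{R}$-graded setting a finitely generated domain is finite over a polynomial subring generated by homogeneous elements, via a standard prime-avoidance argument on homogeneous elements.

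Finally, for $a_n(M)=\rank(M)a_n(R)$, I will choose a homogeneous $K$-basis of $M\otimes K(R)$. Clearing denominators gives a graded injection $\bigoplus R(-\lambda_i)\to M$ whose cokernel $Q$ is torsion. Then $\dim Q<n$, and by the previous steps applied to $Q$ (its Hilbert series over $S$ has pole order $<n$) we get $a_n(Q)=0$. Shifts do not change $a_n$, since $(x-\lambda)^n=x^n+O(x^{n-1})$, so additivity gives the result.
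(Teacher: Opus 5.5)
Your route differs from the paper's in two sensible ways. A finite graded free resolution over $S$ replaces the paper's injection $\bigoplus_i R(-\lambda_i)\to M$ with torsion cokernel, so $M$, including its second-order term, reduces to shifted copies of $S$ with no separate analysis of the cokernel. A Laplace-transform computation replaces the paper's Euler--Maclaurin and Riemann--Lebesgue induction. However, two steps do not go through as written.

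\textbf{The finite map $S\to R$.} You want homogeneous $y_1,\dots,y_n$ with $R$ finite over $\mathbbm{k}[y_1,\dots,y_n]$, obtained by homogeneous prime avoidance. That argument needs powers of homogeneous elements to share a common degree, which is impossible when the degrees are rationally independent. Perturbing to a rational grading does not help, because it produces elements that are no longer homogeneous for the original grading. In fact no such $S$ need exist.

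Take $R=\mathbbm{k}[x_1,x_2,y_1,y_2]/(x_1y_1-x_2y_2)$ with $\deg x_i=1$ and $\deg y_i=\sqrt2$. This is a $3$-dimensional graded domain with $R_0=\mathbbm{k}$. Its homogeneous elements are the images of bihomogeneous polynomials, so every homogeneous element of positive degree lies in $(x_1,x_2)\cup(y_1,y_2)$. Suppose $R$ were finite over $\mathbbm{k}[z_1,z_2,z_3]$ with the $z_i$ homogeneous of positive degree; then $V(z_1,z_2,z_3)$ would be finite. Every $z_i$ of positive $x$-degree vanishes on $V(x_1,x_2)\cong\mathbb{A}^2$, so at least two of the $z_i$ must have $x$-degree $0$. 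Symmetrically, at least two must have $y$-degree $0$. No $z_i$ can have both, so four elements would be needed.

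The paper's proof uses the same graded Noether normalization, so this is a flaw you share with it rather than a deviation. Still, whenever the degrees span a subgroup of $\mathbb{R}$ of rank at least $2$, the reduction fails. One repair keeps the rest of your plan:
write $M=F/U$ with $F$ graded free over $P=\mathbbm{k}[T_1,\dots,T_m]$;
replace $U$ by its initial module for a term order, which preserves the graded Hilbert function and the dimension (at most $n$);
take a Stanley decomposition $F/\mathrm{in}(U)\cong\bigoplus_j u_j\,\mathbbm{k}[T_i : i\in S_j]$ of graded vector spaces, with $|S_j|\le n$.
Then $\dim M_{\le x}$ is a nonnegative combination of shifted lattice counts in at most $n$ variables. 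Your sandwich bound gives the pointwise $O(x^{n-1})$ with no cancellation, and the second-order analysis is needed only for the pieces with $|S_j|=n$.

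\textbf{The averaged $x^{n-1}$ term.} This is the real content of the theorem for $S$, and it is asserted rather than proved. Put $G(x)=\dim S_{\le x}-\frac{x^n}{n!\prod e_k}-\frac{(\sum e_k)x^{n-1}}{2(n-1)!\prod e_k}$. Your expansion $\prod_k(1-e^{-se_k})^{-1}=\frac{1}{s^n\prod e_k}\bigl(1+\frac{s}{2}\sum_k e_k+O(s^2)\bigr)$ identifies the constants neatly and gives $\int_0^\infty G(x)e^{-sx}\,dx=O(s^{1-n})$. But Karamata-type Tauberian theorems require monotonicity and return only leading terms. Nor is $G$ pointwise $O(x^{n-2})$: for $e_k=1$ the count jumps by $\binom{m+n-1}{n-1}$ at $x=m$. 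What you need is $\int_0^T G=o(T^n)$, and the sandwich bound alone only gives $O(T^n)$. There are two ways to close this.

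(a) Use the sandwich bound as the Tauberian condition. After discarding $[0,1]$, the function $g=G/x^{n-1}$ is bounded, and the Laplace estimate says $\int_0^\infty g(\lambda u)u^{n-1}e^{-u}\,du\to0$ as $\lambda\to\infty$. The kernel $u^ne^{-u}$ has nowhere-vanishing Mellin transform, namely the Gamma function value $\Gamma(n+it)$. So Wiener's Tauberian theorem on $(\mathbb{R}_{>0},du/u)$ applies and gives $\int_0^1 g(Tv)v^{n-1}\,dv\to0$, which is exactly $T^{-n}\int_1^T G\to0$.

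(b) Prove your integrated claim directly. With $N_n(x)=\dim S_{\le x}$ one has $\int_0^T N_n=\sum_{0\le c\le T/e_n}\int_0^{T-e_nc}N_{n-1}$. Induction on $n$, with the Euler--Maclaurin formula and its remainder integrated by parts once, gives $\int_0^T G=O(T^{n-1})$, which is stronger than needed.

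In either version, the shifted terms $G(x-\lambda)$ coming from your resolution or decomposition have the same averaged behaviour, because $G=O(x^{n-1})$.
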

Firstly, we prepare some fundamental lemmas. 
\begin{lem}\label{fundamental_lemma}
    Let $r$ be the rank of $M$. Then there is $\lambda_1,\dots,\lambda_r\in \Lambda$ and an injection 
    \[
    \bigoplus_{i=1}^r R(-\lambda_i)\to M
    \]
    whose cokernel is a torsion module.
\end{lem}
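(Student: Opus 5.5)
The plan is to pick $r$ homogeneous elements of $M$ that are linearly independent over $R$. Rank here means $\dim_K (M\otimes_R K)$, where $K=\mathrm{Frac}(R)$.

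First, the homogeneous elements of $M$ generate $M$ as an $R$-module. Their images therefore span the $K$-vector space $M\otimes_R K$. Choose homogeneous $m_1,\dots,m_r\in M$, of degrees $\lambda_1,\dots,\lambda_r\in\Lambda$, whose images form a $K$-basis of $M\otimes_R K$. We may take the $m_i$ from a finite homogeneous generating set $\{x_1,\dots,x_s\}$ of $M$, since some subset of a spanning set is a basis.

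Define the graded homomorphism
\[
\phi\colon \bigoplus_{i=1}^r R(-\lambda_i)\to M,\qquad (f_1,\dots,f_r)\mapsto \sum_i f_i m_i .
\]
It is degree-preserving: the generator $1\in R(-\lambda_i)$ sits in degree $\lambda_i$, by the convention $M(\lambda)_t=M_{\lambda+t}$.

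For injectivity, suppose $\sum f_i m_i=0$. Then $\sum f_i\,(m_i\otimes 1)=0$ in $M\otimes_R K$. Linear independence over $K$ forces $f_i=0$ in $K$, hence $f_i=0$ in $R$, because $R$ is a domain and $R\to K$ is injective. Note that this step does not need $M$ to be torsion-free.

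For the cokernel, let $C=M/\phi(\bigoplus R(-\lambda_i))$. Localization is exact, so $C\otimes_R K=(M\otimes_R K)/\bigl(\sum K m_i\bigr)=0$. Since $C$ is finitely generated, each generator $\bar x_j$ is killed by some nonzero $s_j\in R$, and then the product $\prod_j s_j$ annihilates $C$. We can do better and make the annihilator homogeneous. Write each $x_j=\sum_i (a_{ij}/b_{ij}) m_i$ in $M\otimes K$, then clear denominators to get $s_j x_j\in\sum R m_i$ for some nonzero $s_j\in R$. Taking the homogeneous component of top degree (or any nonzero homogeneous component) of $s_j$ keeps the relation in a suitable graded sense, since both $x_j$ and the $m_i$ are homogeneous. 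Either way, $C$ is a torsion module, and a graded one.

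The only point needing care is the homogeneity bookkeeping: checking that $\phi$ is a graded map with the right shifts, and that choosing the basis among homogeneous generators is legitimate. Both are straightforward. The statement asks only for torsion, which already follows from $C\otimes_R K=0$, so the homogeneity refinement of the annihilator is optional.
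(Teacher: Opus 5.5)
Your proposal is correct and follows the same route as the paper: choose, from a finite homogeneous generating set, elements whose images form a basis of $M\otimes_R K(R)$, and map $\bigoplus_{i=1}^r R(-\deg m_i)$ to $M$ by sending the generators to them. You also write out the injectivity and the torsion of the cokernel (via flatness of $K(R)$ over $R$), which the paper's proof leaves implicit.
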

\begin{proof}
    Take a homogeneous generator $\{m_1,\dots,m_N\}$ of $M$. Then the $K(R)$-vector space $M\otimes_{R}K(R)$ is spanned by $\{m_i\otimes 1\}_{i=1}^N$. Thus there is some $i_1,\dots,i_r\in \{1,\dots,N\}$ such that $\{m_{i_1}\otimes 1,\dots,m_{i_r}\otimes 1\}$ is the basis of $M\otimes_RK(R)$. Let $\lambda_i:=\deg(m_i)$. Then we have the injection which satisfies the required properties.
\end{proof}
\begin{lem}\label{weak_riemann_roch_for_polynomial_ring}
    Let $\gamma_1,\dots,\gamma_n\in \mathbb{R}_{>0}$, and $R=\mathbbm{k}[x_1^{(\gamma_1)},\dots,x_n^{(\gamma_n)}]$ be the polynomial algebra, with $\deg(x_i)=\gamma_i$. Then 
    \[
    \dim(R_{\leq x})=\frac{1}{n!\prod_{i=1}^n\gamma_i}x^n+O(x^{n-1})
    \]
\end{lem}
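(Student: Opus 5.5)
The count $\dim(R_{\leq x})$ equals the number of lattice points $(a_1,\dots,a_n)\in\mathbb{Z}_{\geq 0}^n$ with $\sum_i a_i\gamma_i\leq x$, since the monomials $x_1^{a_1}\cdots x_n^{a_n}$ form a homogeneous basis of $R$ and $\deg(x_1^{a_1}\cdots x_n^{a_n})=\sum a_i\gamma_i$. So I will compare this lattice-point count with the volume of the simplex
\[
\Delta_x=\{y\in\mathbb{R}_{\geq 0}^n\mid \textstyle\sum_i \gamma_i y_i\leq x\},\qquad \vol(\Delta_x)=\frac{x^n}{n!\prod_i\gamma_i},
\]
whose volume is obtained by the linear change of variables $y_i\mapsto \gamma_i y_i$ from the standard simplex of volume $x^n/n!$.

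The comparison uses the unit cubes $C_a=a+[0,1)^n$ for lattice points $a\in\mathbb{Z}_{\geq0}^n$. These cubes are disjoint, and every point of $C_a$ has weighted sum within $c:=\sum_i\gamma_i$ of that of $a$. Hence:
\begin{itemize}
\item if $a$ is counted, i.e. $\sum a_i\gamma_i\leq x$, then $C_a\subset\Delta_{x+c}$;
\item conversely, $\Delta_{x}\subset\bigcup_{a\text{ counted}} C_a$, since any $y\in\Delta_x$ lies in $C_{\lfloor y\rfloor}$ and $\lfloor y\rfloor$ has weighted sum at most $x$.
\end{itemize}
Summing volumes gives
\[
\vol(\Delta_x)\leq \dim(R_{\leq x})\leq \vol(\Delta_{x+c}).
\]

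The two bounds differ by
\[
\frac{(x+c)^n-x^n}{n!\prod_i\gamma_i}=O(x^{n-1})
\]
as $x\to\infty$, since $c$ is a fixed constant. This gives $\dim(R_{\leq x})=\frac{x^n}{n!\prod_i\gamma_i}+O(x^{n-1})$, as claimed.

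There is no real obstacle here. The only points needing care are the half-open cube bookkeeping in the two inclusions, and the fact that the $\gamma_i$ may be irrational. Irrationality is harmless because the argument never uses integrality of the degrees, only positivity of the $\gamma_i$; positivity is what makes the region $\Delta_x$ bounded and the shift constant $c$ finite. Alternatively, one could argue by induction on $n$, slicing by the power of $x_n$ and summing the $(n-1)$-dimensional estimate, but the volume comparison is cleaner.
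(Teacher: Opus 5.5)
Your proof is correct, but it takes a different route from the paper.

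The paper argues by induction on $n$. It slices by the exponent of $x_n$, writing
\[
\dim(R_{\leq x})=\sum_{k=0}^{\lfloor x/\gamma_n\rfloor}\dim(\mathbbm{k}[x_1,\dots,x_{n-1}]_{\leq x-\gamma_nk}).
\]
It then applies the $(n-1)$-dimensional estimate to each of the $O(x)$ slices, each with an $O(x^{n-2})$ error, and evaluates the remaining sum $\sum_k(x-\gamma_nk)^{n-1}$ as a Riemann sum for $x^n/(n\gamma_n)$.

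You instead compare the lattice-point count directly with the volume of the weighted simplex, using disjoint half-open unit cubes. This gives the explicit sandwich $\vol(\Delta_x)\leq\dim(R_{\leq x})\leq\vol(\Delta_{x+c})$ with $c=\sum_i\gamma_i$. Both inclusions check out; the lower one correctly uses $\gamma_i>0$, so that rounding $y$ down to $\lfloor y\rfloor$ does not increase the weighted sum.

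Your argument is shorter and non-inductive, it gives an effective error constant, and it makes transparent that integrality of the $\gamma_i$ plays no role. The paper's slicing has a different advantage: the same decomposition is reused verbatim in the next theorem, where the Ces\`aro-averaged second coefficient is extracted by Euler--Maclaurin. Your sandwich cannot detect that coefficient, because its window of width $c$ is exactly of the order of the second term. Suggestively, the averaged answer there, $\frac{(\sum_i\gamma_i)x^{n-1}}{2(n-1)!\prod_i\gamma_i}$, is precisely the second-order term of $\vol(\Delta_{x+c/2})$, i.e.\ of the simplex shifted by the weighted cube-centre.
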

\begin{proof}
    We prove this by induction on $n$. When $n=1$, 
    \[
    \dim(R_{\leq x})=\lfloor\frac{x}{\gamma_1}\rfloor+1. 
    \]
    Thus this is clear. Let us assume that ($n-1$)-case is proved. Then we have
    \begin{align*}
        \dim(R_{\leq x})=&\#\{(x_1,\dots,x_n)\in \mathbb{Z}_{\geq 0}^n\mid \sum_{i=1}^n \gamma_ix_i\leq x\}\\
        =&\sum_{k=0}^{\lfloor \frac{x}{\gamma_n}\rfloor}\dim(\mathbbm{k}[x_1^{(\gamma_1)},\dots,x_{n-1}^{(\gamma_n)}]_{\leq x-\gamma_n k})\\
        =&\sum_{k=0}^{\lfloor \frac{x}{\gamma_n}\rfloor}\frac{(x-\gamma_nk)^{n-1}}{(n-1)!\gamma_1\dots\gamma_{n-1}}+O(x^{n-1})\\
        =&\sum_{k=0}^{\lfloor \frac{x}{\gamma_n}\rfloor}\frac{\left\{\gamma_nk+(x-\gamma_n\lfloor\frac{x}{\gamma_n}\rfloor)\right\}^{n-1}}{(n-1)!\gamma_1\dots\gamma_{n-1}}+O(x^{n-1})\\
        =&\frac{x^n}{n!\gamma_1\dots\gamma_n}+O(x^{n-1})
    \end{align*}
\end{proof}
\begin{prop}\label{the_weak_riemann_roch_formula}
    Let $n:=\dim(R)$. Then, for any finitely generated $\mathbb{R}$-graded module $M$, there exists some constant $a_n(M)\geq 0$ such that 
    \[
    \dim(M_{\leq x})=\frac{a_n(M)}{n!}x^n+O(x^{n-1}).
    \]
    Moreover, $a_n(M)=\rank(M)a_n(R)$.
\end{prop}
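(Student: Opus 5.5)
We prove the statement first for $M=R$ and then for a general module $M$ by dévissage, using Lemma \ref{fundamental_lemma}. Throughout we use induction on $n=\dim(R)$, and we let the statement range over all finitely generated graded domains of dimension $\le n$ and all finitely generated $\mathbb{R}$-graded modules over them. In the induction the $O(x^{n-1})$ bounds also have to hold for modules whose support has dimension $<n$, for instance $R/\mathfrak p$-modules. So we run the induction with the weaker claim that $\dim(N_{\leq x})=O(x^{d})$ whenever $\dim\operatorname{Supp}N\le d$, and we prove this claim along the way.

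\emph{Case $M=R$, and the key upper bound.} Choose homogeneous generators $y_1,\dots,y_m$ of $R$ of positive degrees. Since $R_0=\mathbbm{k}$, every nonzero element of $\Gamma$ is positive, so graded Noether normalization applies. Taking suitable powers $y_j^{e_j}$ gives homogeneous elements of a common degree only if the degrees are commensurable, which they need not be. The plan is instead to choose homogeneous $z_1,\dots,z_n$ that are algebraically independent, with $R$ finite over $A=\mathbbm{k}[z_1,\dots,z_n]$. For this I would use that $R$ is integral over the subalgebra generated by all monomials of a fixed large degree in each $\mathbb{Q}$-linearly independent direction of the degrees. If this is awkward, a fallback is a prime-avoidance argument on homogeneous elements: pick $z_1$ homogeneous outside all minimal primes of the irrelevant structure, and iterate on $R/(z_1)$.

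Granting the normalization, $R$ is a finitely generated graded torsion-free $A$-module of rank $r_0=[K(R):K(A)]$. Lemma \ref{fundamental_lemma}, applied over $A$, gives $0\to\bigoplus_{i=1}^{r_0} A(-\lambda_i)\to R\to T\to 0$ with $T$ a torsion $A$-module. Symmetrically, choosing a common denominator gives an embedding $R\hookrightarrow \bigoplus_{i} A(-\lambda'_i)$ with torsion cokernel. Shifts change $\dim(A(-\lambda)_{\le x})$ only by $O(x^{n-1})$, by Lemma \ref{weak_riemann_roch_for_polynomial_ring}. Torsion $A$-modules are annihilated by a nonzero homogeneous $f$, so they are modules over $A/(f)$. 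Filtering them by modules of the form $A/\mathfrak p$ with shifts, each of dimension $\le n-1$, the induction hypothesis gives $\dim(T_{\le x})=O(x^{n-1})$. Combining the two embeddings, exactness of $\dim(\cdot_{\le x})$ on degree-preserving exact sequences, and Lemma \ref{weak_riemann_roch_for_polynomial_ring}, we get
\[
\dim(R_{\le x})=\frac{r_0}{n!\prod\deg z_i}x^n+O(x^{n-1}),
\]
so that $a_n(R)=r_0/\prod\deg z_i\ge 0$.

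\emph{General $M$.} Let $r=\rank(M)$. Lemma \ref{fundamental_lemma} gives $0\to\bigoplus_{i=1}^r R(-\lambda_i)\to M\to T\to0$ with $T$ torsion. Each shift contributes $\dim(R(-\lambda_i)_{\le x})=\dim(R_{\le x-\lambda_i})=\frac{a_n(R)}{n!}x^n+O(x^{n-1})$. The module $T$ is finitely generated and killed by a nonzero homogeneous element, hence has support of dimension $\le n-1$. It is also filtered with subquotients that are shifts of $R/\mathfrak p$, where $\mathfrak p$ is a homogeneous prime and $R/\mathfrak p$ is a graded domain of dimension $<n$ with degree-$0$ part $\mathbbm{k}$. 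The case of domains of dimension $<n$ then gives $\dim(T_{\le x})=O(x^{n-1})$. Summing yields $a_n(M)=r\,a_n(R)$.

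\emph{Main obstacle.} The delicate point is the graded Noether normalization with real, possibly incommensurable degrees, and the resulting finiteness of $R$ over a weighted polynomial ring. A secondary point is making the $O(x^{n-1})$ bound for torsion modules uniform via the filtration by $R/\mathfrak p$. I would attempt the normalization first, via homogeneous prime avoidance, which works over an infinite field because homogeneous components of fixed degree are vector spaces.
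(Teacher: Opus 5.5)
Your overall route is the paper's: normalize $R$ over a weighted polynomial ring, apply Lemma \ref{fundamental_lemma} there, bound the torsion part by induction on dimension, then repeat the d\'evissage over $R$ for general $M$. Those d\'evissage steps are fine.

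\textbf{The gap.} The step you flag yourself cannot be filled as stated: when the degrees are incommensurable, a Noether normalization by \emph{homogeneous} $z_1,\dots,z_n$ need not exist. Take $R=\mathbbm{k}[x,y,z,w]/(xy-zw)$ with $\deg x=1$, $\deg y=\sqrt2$, $\deg z=\sqrt3-\tfrac12$, $\deg w=\tfrac32+\sqrt2-\sqrt3$. All degrees are positive, the relation is homogeneous, and $R$ is a $3$-dimensional domain with $R_0=\mathbbm{k}$. Since $1,\sqrt2,\sqrt3$ are $\mathbb{Q}$-linearly independent, the kernel of the degree map $\mathbb{Z}^4\to\mathbb{R}$ on exponents is exactly $\mathbb{Z}(1,1,-1,-1)$. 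Hence every $R_\lambda$ is at most one-dimensional, and the homogeneous elements are the scalar multiples of monomials.

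Suppose $R$ were finite over $\mathbbm{k}[z_1,z_2,z_3]$ with $z_i$ homogeneous of positive degree. Taking the top-degree part of an integral equation for $x$ gives $x^k\in(z_1,z_2,z_3)R$ for some $k$. Applying the retraction $R\to\mathbbm{k}[x]$ that kills $y,z,w$ then shows that some $z_i$ is a scalar multiple of a power of $x$. The same holds for $y$, $z$ and $w$, so you would need four distinct $z_i$.

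Homogeneous prime avoidance fails for the same reason. After $z_1=x$, $z_2=y$ there is no homogeneous element of positive degree outside both $(x,y,z)$ and $(x,y,w)$, since a monomial outside the first is a power of $w$. A non-homogeneous element such as $z+w$ would work. The paper's Step 2 appeals to ``Noether's normalization theorem for graded algebras'' at exactly this point, so the gap is inherited from the source. It is nonetheless real in the stated generality, where irrational weights are explicitly allowed.

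\textbf{How to repair it.} If the group generated by $\Gamma$ is $\delta\mathbb{Z}$ (the quasi-regular case used in Section 3), rescaling gives a $\mathbb{Z}_{\ge0}$-grading, the classical graded normalization applies, and your argument is complete. In general you can bypass normalization by a Gr\"obner degeneration. Write $R=P/I$ with $P=\mathbbm{k}[Y_1,\dots,Y_m]$, $\deg Y_j=\deg y_j>0$ and $I$ homogeneous, and fix any monomial order. The leading monomial of $f\in I$ is also the leading monomial of the homogeneous component of $f$ containing it, and each $P_\lambda$ is finite-dimensional. Hence $\dim(P/I)_\lambda=\dim(P/\mathrm{in}(I))_\lambda$ for every $\lambda\in\mathbb{R}$. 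Choose a Stanley decomposition $\bigsqcup_i u_i\cdot\mathrm{Mon}(Y_{F_i})$ of the standard monomials. Lemma \ref{weak_riemann_roch_for_polynomial_ring} then gives
\[
\dim(R_{\le x})=\sum_i\dim\bigl(\mathbbm{k}[Y_{F_i}]_{\le x-\deg u_i}\bigr)=\frac{x^n}{n!}\sum_{|F_i|=n}\ \prod_{j\in F_i}\frac{1}{\deg Y_j}+O(x^{n-1}),
\]
using the standard facts $\max_i|F_i|=\dim P/\mathrm{in}(I)=\dim P/I$. The same computation for each $R/\mathfrak p$ gives the bounds $O(x^{\dim R/\mathfrak p})$ for the torsion pieces. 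The rest of your d\'evissage, including $a_n(M)=\rank(M)\,a_n(R)$, then goes through unchanged.
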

\begin{proof}
    Let $(\alpha_n)$ be the statement of the proposition with $R=M$, $\dim(R)=n$ case, and $(\beta_n)$ be the statement of the proposition with $\dim(R)=n$. $(\alpha_0)$ and $(\beta_0)$ is trivial.
    \begin{enumerate}
        \item[\underline{Step 1}] Firstly, we prove that $(\alpha_n)\land (\beta_{n-1})$ implies $(\beta_n)$. Assume that $(\alpha_n)$ and $(\beta_{n-1})$ are proved and $\dim(R)=n$. Take an exact sequence 
        \[
        0\to \bigoplus_{i=1}^r R(-\lambda_i)\to M\to Q\to 0
        \]
        as in the Lemma \ref{fundamental_lemma}. Then $Q$ is supported in codimension $\geq 1$ subscheme of $\Spec(R)$. Using the induction on the number of associated primes of $Q$ with the assumption $(\beta_{n-1})$, we can prove $\dim(Q_{\leq x})=O(x^{n-1})$. In fact, if $\mathrm{Ass}(Q)=\{P\}$, $Q$ is $P$ primary, and we have Jordan-H\"{o}lder sequence $0=Q_0\subset Q_1\subset \dots\subset Q_l$, where $l=\length(Q)$. Each quotient $Q_i/Q_{i-1}$ is isomorphic to $(R/P)(\lambda_i)$ as graded $(R/P)$-module. Thus we have by assumption $(\beta_{n-1})$ that 
        \[
        \dim (Q_{\leq x})=\sum_{i}\dim(Q_i/Q_{i-1})_{\leq x}=O(x^{n-1}).
        \]
        Moreover, we get by this argument that
        \[
        \dim(Q_{\leq x})=\left(\sum_{P}\frac{\length(H^0_P(Q))a_{n-1}(R/P)}{(n-1)!}\right)x^{n-1}+O(x^{n-2}),
        \]
        where the sum runs over every associated prime of $Q$ with $\height(P)=1$.
        Therefore, 
        \[
        \dim(M_{\leq x})=\sum_{i=1}^r\dim(R_{\leq x-\lambda_i})=\frac{\rank(M)a_n(R)}{n!}x^n+O(x^{n-1}).
        \]
        \item[\underline{Step 2}]Secondly, we prove that $(\beta_{n-1})$ implies $(\alpha_n)$. By Noether's normalization theorem for graded algebras, we have some inclusion of graded $\mathbbm{k}$-algebras $S:=k[x_1^{(\gamma_i)},\dots,x_n^{(\gamma_n)}]\subset R$ such that $R$ is finitely generated graded $S$-module. Thus, by Lemma \ref{fundamental_lemma}, we get the exact sequence 
        \[
        0\to \bigoplus_{i=1}^r S(-\gamma_i)\to R\to Q\to 0
        \]
        of graded $S$-modules, where $Q$ is a torsion $S$-module. The argument as in \underline{Step 1} shows that 
        \[
        \dim(Q_{\leq x})=O(x^{n-1}).
        \]
        Therefore, using the Lemma \ref{weak_riemann_roch_for_polynomial_ring}, conclude that 
        \[
        \dim(R_{\leq x})=\frac{r}{n!\gamma_1\dots\gamma_n}x^n+O(x^{n-1}).
        \]
    \end{enumerate}
\end{proof}
In our setting, we can associate a valuation $v$ of $R$ in the natural way, namely, $v(\sum_\gamma f_\gamma)=\min\{\gamma\in \Gamma\mid f_\gamma\neq 0\}$, or $v(f)=\infty$ if $f=0$. Then, $a_n(R)$ is nothing but $\vol(v)$. 
\begin{defn}
    We call $a_n(R)$ as the {\bf volume} of the graded algebra $R$, and we sometimes write this as $\vol(R)$.
\end{defn}
Now, we can prove the first step of the theorem \ref{generalized_RR}. 
\begin{Thm}
    Let $\gamma_1,\dots,\gamma_n\in \mathbb{R}_{>0}$, and $S=\mathbbm{k}[x_1^{(\gamma_1)},\dots,x_n^{(\gamma_n)}]$. Then we have
    \[
    \lim_{T\to \infty}\frac{1}{T^n}\int_0^T \left(\dim(S_{\leq x})-\frac{x^n}{n!\gamma_1\dots\gamma_n}-\frac{(\sum_{i=1}^n \gamma_i)x^{n-1}}{2(n-1)!\prod_{i=1}^n \gamma_i}\right)dx=0
    \]
\end{Thm}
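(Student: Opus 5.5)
Write $N(x):=\dim(S_{\leq x})=\#\{m\in\mathbb{Z}_{\geq 0}^n\mid \sum_i\gamma_im_i\leq x\}$ and $P(x):=\frac{x^n}{n!\gamma_1\cdots\gamma_n}+\frac{(\sum_i\gamma_i)x^{n-1}}{2(n-1)!\prod_i\gamma_i}$. The plan is to show that $I(T):=\int_0^T N(x)\,dx$ satisfies $I(T)=\int_0^TP(x)\,dx+o(T^n)$. The tool is the integrated counting function. Since $N$ is a step function,
\[
I(T)=\sum_{m\in\mathbb{Z}^n_{\geq 0},\ \langle\gamma,m\rangle\leq T}\bigl(T-\langle\gamma,m\rangle\bigr),
\]
which is the lattice sum of the continuous, piecewise-linear function $f_T(y)=\max(T-\langle\gamma,y\rangle,0)$ over the orthant. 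The point of integrating is that $f_T$ no longer has the jump discontinuity of $N$ itself. The raw count $N(x)$ does not have a second-order term when the $\gamma_i$ are rationally dependent (it oscillates at order $x^{n-1}$), but after averaging the oscillation disappears.

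The main step is an Euler–Maclaurin estimate, carried out one coordinate at a time. For a continuous, piecewise-$C^1$ function $g$ on $[0,\infty)$ with compact support, one has
\[
\sum_{k\geq 0}g(k)=\int_0^\infty g+\tfrac12 g(0)+E,\qquad |E|\leq C\cdot \mathrm{Var}(g').
\]
I would apply this to $f_T$ in each variable in turn, replacing each sum by an integral plus one half of the boundary face. The main term is $\int_{\mathbb{R}^n_{\geq0}}f_T=\frac{T^{n+1}}{(n+1)!\prod\gamma_i}$. The boundary face $\{y_i=0\}$ contributes $\frac12\int_{\mathbb{R}^{n-1}_{\geq0}}f_T|_{y_i=0}=\frac12\frac{T^{n}}{n!\prod_{j\neq i}\gamma_j}$. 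Summing over $i$ gives
\[
\frac{(\sum_i\gamma_i)T^n}{2\,n!\prod_i\gamma_i}.
\]
These two terms are exactly $\int_0^TP(x)\,dx$. Every remaining error term involves either lower-dimensional faces (with two or more half-corrections, of size $O(T^{n-1})$) or Euler–Maclaurin remainders. In each remainder the summation in one variable is replaced by an $O(1)\cdot\sup|\partial f|$ term, and the rest is still a sum or integral over $n-1$ variables of a function bounded by $T$ on a region of volume $O(T^{n-1})$.

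The main obstacle is controlling these remainders uniformly so that they are $O(T^{n-1})=o(T^n)$. A crude bound of the form (one variable's remainder) times (the count in the other $n-1$ variables, of size $O(T^{n-1})$) times (the derivative bound $\gamma_i$) gives exactly $O(T^{n-1})$. This works because $f_T$ has bounded first derivatives and a single kink, so the derivative variation along each line is bounded independently of $T$. Carrying out this bound carefully, by induction on $n$ in the same way as in Lemma \ref{weak_riemann_roch_for_polynomial_ring}, should finish the proof. Finally, dividing by $T^n$ gives
\[
\frac1{T^n}\int_0^T\bigl(N(x)-P(x)\bigr)\,dx=O(1/T)\to 0.
\]
The statement's lower limit $0$ versus $1$ is irrelevant, since $\int_0^1$ is bounded.
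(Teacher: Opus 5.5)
Your argument is correct, but it takes a different route from the paper's.

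The paper works with the counting function itself and inducts on $n$. It slices $\dim(S_{\leq x})=\sum_{k=0}^{\lfloor x/\gamma_n\rfloor}\dim(S'_{\leq x-\gamma_nk})$ and inserts the $(n-1)$-variable statement in the weak form ``two main terms $+\,R(y)$, with $R(y)=O(y^{n-2})$ and $T^{-(n-1)}\int_0^TR\to0$''. It then evaluates $\sum_k(x-\gamma_nk)^{n-1}$ by first-order Euler--Maclaurin, disposing of the sawtooth remainder with the Riemann--Lebesgue lemma. Finally, an $\epsilon$-argument shows that the accumulated error $\sum_kR(x-\gamma_nk)$ vanishes after Ces\`aro averaging.

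You integrate first, so that $\int_0^TN(x)\,dx$ becomes the lattice sum of the continuous function $f_T$. Since $f_T$ vanishes on the slanted facet, the oscillation that forces the paper to average (and to invoke Riemann--Lebesgue) never appears. Coordinatewise Euler--Maclaurin then identifies the two terms as the volume and half the coordinate facets, with every remainder bounded by $C\,\mathrm{Var}(g')$. This buys a quantitative result, $\int_0^T(N-P)\,dx=O(T^{n-1})$, i.e.\ a rate $O(1/T)$. It also avoids carrying a merely qualitative averaged hypothesis through an induction. The paper's route stays closer to the slicing of Lemma~\ref{weak_riemann_roch_for_polynomial_ring}, but it only yields $o(1)$.

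One detail to state accurately when you write it out. The claim that the derivative variation along each line is bounded independently of $T$ holds for $f_T$ and its restrictions to coordinate faces. It fails for partially integrated functions such as $\int_0^\infty f_T\,dy_1=(T-\langle\gamma',y'\rangle)_+^2/(2\gamma_1)$, with $\gamma'=(\gamma_2,\dots,\gamma_n)$, whose $y_2$-derivative has variation of order $T$. This is harmless for two reasons:
\begin{enumerate}
\item the number of remaining lines then drops to $O(T^{n-2})$, so the product is still $O(T^{n-1})$;
\item alternatively, the one-variable operators (sum, integral, half-evaluation at $0$) in different coordinates commute, so you can arrange for every Euler--Maclaurin remainder to be taken directly on $f_T$.
\end{enumerate}
Either way the total error is $O(T^{n-1})$.
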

\begin{proof}
    We prove this theorem by induction on $n$. If $n=1$, we have 
    \[
    \dim(S_{\leq x})-\frac{x}{\gamma_1}=1-\left\{\frac{x}{\gamma_1}\right\},
    \]
    where $\{\lambda\}$ is the fractional part of $\lambda$. Thus,
    \begin{align*}
        \frac{1}{T}\int_0^T\left(\dim(S_{\leq x})-\frac{x}{\gamma_1}\right)dx=&\frac{1}{T}\sum_{k=0}^{\lfloor\frac{T}{\gamma_1}\rfloor-1}\int_{\gamma_1k}^{\gamma_1(k+1)}\left(k+1-\frac{x}{\gamma_1}\right)dx+o(1)\\
        =&\frac{1}{T}\frac{\gamma_1}{2}\cdot \lfloor\frac{T}{\gamma_1}\rfloor+o(1)\\
        \to& \ \frac{1}{2} \qquad (T\to \infty)
    \end{align*}
    Assume that the proposition is proved for $(n-1)$-dimensional $S$. Let $S'=\mathbbm{k}[x_1^{(\gamma_1)},\dots,x_{n-1}^{(\gamma_{n-1})}]\subset S$ be the graded subalgebra of $S$ generated by $x_1,\dots,x_{n-1}$. Then we have 
    \[
    \dim(S_{\leq x})=\sum_{k=0}^{\lfloor \frac{x}{\gamma_n}\rfloor}\dim(S'_{\leq x-\lambda_nk}).
    \]
    By the inductive hypothesis, 
    \begin{samepage}
        \begin{align*}
        \dim(S_{\leq x})-\frac{x^n}{n!\gamma_1\dots\gamma_n}=&\sum_{k=0}^{\lfloor \frac{x}{\gamma_n}\rfloor}\left(\dim(S'_{\leq x-\lambda_nk})-\frac{(x-\gamma_nk)^{n-1}}{(n-1)!\gamma_1\dots\gamma_{n-1}}\right)\\
        &+\sum_{k=0}^{\lfloor\frac{x}{\gamma_n}\rfloor}\frac{(x-\gamma_nk)^{n-1}}{(n-1)!\gamma_1\dots\gamma_{n-1}}-\frac{x^n}{n!\gamma_1\dots\gamma_n}\\
        =&\sum_{k=0}^{\lfloor\frac{x}{\gamma_n}\rfloor}\left(\frac{(\sum_{i=1}^{n-1}\gamma_i)(x-\gamma_nk)^{n-2}}{2(n-2)!\gamma_1\dots\gamma_{n-1}}+R(x-\gamma_nk)\right)\\
        &+\sum_{k=0}^{\lfloor\frac{x}{\gamma_n}\rfloor}\frac{(x-\gamma_nk)^{n-1}}{(n-1)!\gamma_1\dots\gamma_{n-1}}-\frac{x^n}{n!\gamma_1\dots\gamma_n}\\
        =&\frac{(\sum_{i=1}^{n-1}\gamma_i)x^{n-1}}{2(n-1)!\gamma_1\dots\gamma_n}+\sum_{k=0}^{\lfloor\frac{x}{\gamma_n}\rfloor}R(x-\gamma_nk)\\
        &+\sum_{k=0}^{\lfloor\frac{x}{\gamma_n}\rfloor}\frac{(x-\gamma_nk)^{n-1}}{(n-1)!\gamma_1\dots\gamma_{n-1}}-\frac{x^n}{n!\gamma_1\dots\gamma_n}+O(x^{n-2}),
    \end{align*}
    \end{samepage}
    where the function $R(x)$ is a function satisfying the following properties:
    \begin{enumerate}
        \item $R(x)=O(x^{n-2})$
        \item $\lim_{T\to \infty}\frac{1}{T^{n-1}}\int_0^T R(x)dx=0$.
    \end{enumerate}
    By Euler-Maclaurin's summation principle, we have 
    \[
    \int_0^{\frac{x}{\gamma_n}}\left(t-\lfloor t\rfloor -\frac{1}{2}\right)\frac{d}{dt}(x-\gamma_nt)^{n-1}dt=\sum_{k=1}^{\lfloor\frac{x}{\gamma_n}\rfloor-1}(x-\gamma_nk)^{n-1}+\frac{1}{2}x^{n-1}-\frac{x^n}{n\gamma_n}.
    \]
    Moreover, we have
    \begin{align*}
        &\frac{1}{x^n}\int_0^{\frac{x}{\gamma_n}}\left(t-\lfloor t\rfloor -\frac{1}{2}\right)\frac{d}{dt}(x-\gamma_nt)^{n-1}dt\\
        =&-\frac{(n-1)\gamma_n}{x^n}\int_0^{\frac{x}{\gamma_n}}\left(t-\lfloor t\rfloor-\frac{1}{2}\right)(x-\gamma_nt)^{n-2}dt\\
        =&-(n-1)\int_0^1\left(\frac{xu}{\gamma_n}-\left\lfloor\frac{xu}{\gamma_n}\right\rfloor-\frac{1}{2}\right)u^{n-2}du\\
        \to&\ 0\qquad (x\to \infty).
    \end{align*}
    We used Riemann-Lebesgue theorem in the last line. Thus, we can write 
    \[
    \sum_{k=0}^{\lfloor\frac{x}{\gamma_n}\rfloor}\frac{(x-\gamma_nk)^{n-1}}{(n-1)!\gamma_1\dots\gamma_{n-1}}-\frac{x^n}{n!\gamma_1\dots\gamma_n}=\frac{x^{n-1}}{2(n-1)!\gamma_1\dots\gamma_{n-1}}+r(x),
    \]
    where $r(x)=O(x^{n-1})$ and 
    \[
    \lim_{T\to \infty}\frac{1}{T^n}\int_0^T r(x)dx=0.
    \]
    Therefore, we get
    \[
    \dim(S_{\leq x})-\frac{x^n}{n!\gamma_1\dots\gamma_n}=\frac{(\sum_{i=1}^n\gamma_i)x^{n-1}}{2(n-1)!\gamma_1\dots\gamma_n}+r(x)+\sum_{k=0}^{\lfloor\frac{x}{\gamma_n}\rfloor}R(x-\gamma_nk)+O(x^{n-2})
    \]
    Finally, we have to estimate the sum $\sum_{k=0}^{\lfloor\frac{x}{\gamma_n}\rfloor}R(x-\gamma_nk)$. 
    \begin{align*}
        \left|\frac{1}{T^n}\int_0^T\sum_{k=0}^{\lfloor\frac{x}{\gamma_n}\rfloor}R(x-\gamma_nk)dx\right|&=\left|\frac{1}{T^n}\sum_{k=0}^{\lfloor\frac{x}{\gamma_n}\rfloor}\int_{\gamma_nk}^TR(x-\gamma_nk)dx\right|\\
        =&\left|\frac{1}{T}\sum_{k=0}^{\lfloor\frac{x}{\gamma_n}\rfloor}\frac{1}{T^{n-1}}\int_0^{T-\gamma_nk}R(x)dx\right|\\
        =&\left|\frac{1}{T}\sum_{k=0}^{\lfloor\frac{x}{\gamma_n}\rfloor}\left(1-\frac{\gamma_nk}{T}\right)^{n-1}\cdot \frac{1}{(T-\gamma_nk)^{n-1}}\int_0^{T-\gamma_nk}R(x)dx\right|
    \end{align*}
    Since $R(x)=O(x^{n-2})$, the integral $|\frac{1}{T^{n-1}}\int_0^T R(x)dx|$ is bounded by some constant $C>0$. Take an arbitrary $\epsilon>0$, then there is some $\delta>0$ that 
    \[
    \left|\frac{1}{T^{n-1}}\int_0^T R(x)dx\right|<\epsilon
    \]
    for any $T>\delta$. Thus, for $T\gg 0$, we get 
    \begin{align*}
        \left|\frac{1}{T^n}\int_0^T\sum_{k=0}^{\lfloor\frac{x}{\gamma_n}\rfloor}R(x-\gamma_nk)dx\right|\leq &\ \frac{C}{T}\sum_{\frac{T-\delta}{\gamma_n}<k\leq \frac{T}{\gamma_n}}\left(1-\frac{\gamma_nk}{T}\right)^{n-1}+\frac{\epsilon}{\gamma_n}\cdot \left(1-\frac{\delta}{T}\right)\\
        \to &\ \frac{\epsilon}{\gamma_n}\qquad(T\to \infty)
    \end{align*}
    Now, $\epsilon>0$ is arbitrary. Therefore, we have the desired consequence.
\end{proof}
\begin{Rem}
The argument above shows that the averaging procedure
naturally eliminates the oscillatory part of the counting function
even when the weights $\gamma_i$ are irrational.
In particular, $\dim(S_{\leq x})$ is generally neither polynomial nor
quasi-polynomial in $x$, yet the averaged coefficients $a_n$ and
$a_{n-1}$ remain well defined and depend continuously on the grading
weights $(\gamma_i)$.
This observation allows us to extend the asymptotic Riemann--Roch
formula beyond the rational case and, more conceptually, shows that
Ces\`{a}ro averaging provides a canonical regularization of asymptotic
growth for general finitely generated $\mathbb{R}_{\geq 0}$-graded algebras. Note also that 
\[
a_{n-1}(S)=\frac{1}{2}\vol(v)A(v),
\]
where $v$ is the induced valuation of the graded algebra $S$.
\end{Rem}
\begin{proof}[Proof of Theorem \ref{generalized_RR}]
    By Noether's normalization, it suffices to prove in the case $R=\mathbbm{k}[x_1^{(\gamma_1)},\dots,x_n^{(\gamma_n)}]$. Let 
    \[
    0\to \bigoplus_{i=1}^r R(-\lambda_i)\to M\to Q\to 0
    \]
    be as in Lemma \ref{fundamental_lemma}. Then we have 
    \begin{align*}
        \dim(M_{\leq x})=&\sum_{i=1}^r \dim(R_{\leq x-\lambda_i})+\dim(Q_{\leq x})\\
        =&\frac{\rank(M)a_n(R)}{n!}x^n-\frac{(\sum_{i=1}^r \lambda_i)a_n(R)-\rank(M)a_{n-1}(R)}{(n-1)!}x^{n-1}\\
        +&\left(\sum_{P}\frac{\length(H^0_P(Q))a_{n-1}(R/P)}{(n-1)!}\right)x^{n-1}+r(x)
    \end{align*}
    where the sum in the third term runs over all associated primes of $Q$ of height $1$.
    \[
    \lim_{T\to \infty}\frac{1}{T^n}\int_0^T r(x)dx=0.
    \]
    Put $a_{n-1}(M)$ as 
    \[
    a_{n-1}(M):=\rank(M)a_{n-1}(R)+\sum_{P}\length(H^0_P(Q))a_{n-1}(R/P)-\left(\sum_{i=1}^r \lambda_i\right)a_n(R),
    \]
    then we get the desired consequence.
\end{proof}

\begin{ex}[$\mathbb{Z}_{\geq 0}$-graded case]
    Let $R$ be a graded normal affine domain. Assume that $R$ is generated by grade $1$, i.e. $R$ can be written as $R=\mathbbm{k}[x_1,\dots,x_N]/I$, where all of $x_i$ are degree $1$ and $I$ is a homogeneous ideal of $k[x_1,\dots,x_n]$. 
    
    Let $X:=\Spec(R)$ and $0\in X$ be the point corresponding to the ideal $\mathfrak{m}_0=(x_1,\dots,x_n)$. Let $\pi: \hat{X}\to X$ be the blow--up at $0$. Then the exceptional divisor is exactly $E=\Proj(R)$ and the conormal bundle is $\mathcal{O}_E(1)$. By Riemann-Roch, 
    \[
    h^0(\mathcal{O}_E(m))=\frac{(-E)^{n-1}\cdot E}{(n-1)!}m^{n-1}-\frac{K_E\cdot (-E)^{n-2}}{2(n-2)!}m^{n-2}+O(m^{n-3}).
    \]
    Now, we can write  
    \[
    K_E+D=K_{\hat{X}}+E|E,
    \]
    where $D$ is an effective $\mathbb{Q}$-divisor on $E$ called different. Thus,
    \[
    -\frac{1}{2}K_E\cdot (-E|_E)^{n-2}=\frac{1}{2}(K_{\hat{X}}+E)\cdot (-E)^{n-1}+\frac{1}{2}D\cdot (-E|_E)^{n-2}=\frac{1}{2}A(E)(-E)^{n-1}\cdot E+\frac{1}{2}D\cdot (-E|_E)^{n-2}.
    \]
    Thus, 
    \begin{align*}
        \dim(R_{\leq x})=&\sum_{m=1}^{\lfloor x\rfloor}h^0(\mathcal{O}_E(m))\\
        =&\frac{(-E)^{n-1}\cdot E}{n!}\lfloor{x}\rfloor^n+\frac{(A(E)+1)(-E)^{n-1}\cdot E+D\cdot (-E|_E)^{n-2}}{2(n-1)!}\lfloor{x}\rfloor^{n-1}+O(x^{n-2}).
    \end{align*}
    Thus, letting $\frac{D\cdot (-E|_E)^{n-2}}{(-E)^{n-1}\cdot E}=\delta$, we have  
    \[
    \frac{1}{T^n}\int_0^T \left(\dim(R_{\leq x})-\frac{(-E)^{n-1}\cdot E}{n!}x^n-\frac{(A(E)+\delta)(-E)^{n-1}\cdot E}{2(n-1)!}x^{n-1}\right)dx=0.
    \]
    Thus, in this case, we have 
    \[
    \frac{a_{n-1}(R)}{a_n(R)}=\frac{1}{2}(A(E)+\delta),
    \]
    and $\delta$ measures how bad singularities on $E\subset \hat{X}$ are.
\end{ex}

\subsection{Slope stability}
In this subsection, we introduce the notion of slope stability of finitely generated torsion--free graded modules over a finitely generated $\mathbb{R}_{\geq 0}$-graded domain over $\mathbbm{k}$. 
\begin{defn}\label{definition_of_degree}
    Let $R=\bigoplus_{\gamma\in \mathbb{R}_{\geq 0}}R_\gamma$ be a finitely generated graded domain over $R_0=\mathbbm{k}$. For a finitely generated $\mathbb{R}$-graded $R$-module $M=\bigoplus_{\lambda\in \mathbb{R}}M_\lambda$, define the degree of $M$ as follows 
    \[
    \deg(M)=a_{n-1}(M)-\rank(M)a_{n-1}(R).
    \]
\end{defn}
\begin{prop}\label{additivity_of_degree}
    Let $R$ be a finitely generated $\mathbb{R}_{\geq 0}$-graded domain over $R_0=\mathbbm{k}$, and $M_1,M_2,M_3$ be finitely generated $\mathbb{R}$-graded $R$-modules. If there is a short exact sequence 
    \[
    0\to M_1\to M_2\to M_3\to 0
    \]
    of graded $R$-modules, we have
    \[
    \deg(M_2)=\deg(M_1)+\deg(M_3),
    \]
    i.e. $\deg$ extends to an additive function on the $K_0$-group of the abelian category of $\mathbb{R}$-graded $R$-modules.
\end{prop}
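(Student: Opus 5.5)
Since $\rank$ is additive on short exact sequences of finitely generated modules over the domain $R$ (tensor with $K(R)$, which is exact), it suffices to prove that $a_{n-1}$ is additive. The plan is to reduce this to additivity of the counting functions $\dim((-)_{\leq x})$ and then read off coefficients using the uniqueness of the asymptotic data.

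\textbf{Step 1: additivity of the counting functions.} A short exact sequence $0\to M_1\to M_2\to M_3\to 0$ of graded modules is exact in each degree $\lambda$. Taking $\bigoplus_{\lambda\leq x}$ and counting dimensions gives, for every $x$,
\[
\dim((M_2)_{\leq x})=\dim((M_1)_{\leq x})+\dim((M_3)_{\leq x}).
\]
All three are finite, because $M_i$ is finitely generated and $\dim(R_{\leq x})<\infty$ by Proposition \ref{the_weak_riemann_roch_formula}.

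\textbf{Step 2: uniqueness of $a_n$ and $a_{n-1}$.} I need that the pair $(a_n(M),a_{n-1}(M))$ in Theorem \ref{generalized_RR} is uniquely determined by the function $f(x)=\dim(M_{\leq x})$. Suppose two pairs $(a,b)$ and $(a',b')$ both satisfy the two conditions. Then
\[
\frac{(a-a')x^n}{n!}=O(x^{n-1}),
\]
which forces $a=a'$. With $a=a'$, subtracting the two Cesàro conditions gives
\[
\frac{1}{T^n}\int_1^T\frac{(b-b')x^{n-1}}{(n-1)!}\,dx\to 0 .
\]
The left side converges to $(b-b')/n!$, so $b=b'$.

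\textbf{Step 3: additivity of $a_{n-1}$.} The sums $a_n(M_1)+a_n(M_3)$ and $a_{n-1}(M_1)+a_{n-1}(M_3)$ satisfy both asymptotic conditions for $f_2=f_1+f_3$, since those conditions are linear in $f$. By Step 2,
\[
a_{n-1}(M_2)=a_{n-1}(M_1)+a_{n-1}(M_3).
\]
The same argument recovers $a_n(M_2)=a_n(M_1)+a_n(M_3)$, which is consistent with $a_n=\rank\cdot a_n(R)$.

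\textbf{Step 4: conclusion.} Combining Step 3 with additivity of rank gives
\[
\deg(M_2)=\deg(M_1)+\deg(M_3).
\]
Since $\deg$ is additive on short exact sequences, it factors through $K_0$.

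\textbf{Main obstacle.} The only delicate point is that Theorem \ref{generalized_RR} must apply to every finitely generated graded module, including torsion modules such as $M_3$, so that $a_{n-1}(M_3)$ is defined. Its proof treats torsion modules $Q$ through Step 1 of Proposition \ref{the_weak_riemann_roch_formula}, but I would check explicitly that it gives a Cesàro-averaged $(n-1)$-st coefficient, and not merely an $O(x^{n-1})$ bound. The natural way is to run the same Jordan–Hölder induction over $R/P$ with $\height P=1$, applying the theorem in dimension $n-1$. That yields the coefficient $\sum_P\length(H^0_P(Q))\,a_{n-1}(R/P)$ together with a Cesàro-negligible error. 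The contributions from primes of height at least $2$ are $O(x^{n-2})$ and so also Cesàro-negligible.
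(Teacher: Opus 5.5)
Your proof is correct and follows the paper's argument. The paper likewise reduces to additivity of $a_{n-1}$ and deduces it from $\dim((M_2)_{\leq x})=\dim((M_1)_{\leq x})+\dim((M_3)_{\leq x})$; it leaves implicit the additivity of rank and the uniqueness of $(a_n,a_{n-1})$, which you make explicit.

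Your check for torsion $M_3$ is also sound. In fact only Proposition \ref{the_weak_riemann_roch_formula}, applied to the factors $R/P$ of a prime filtration, is needed there, since any $O(x^{n-2})$ error becomes negligible after integrating and dividing by $T^n$.
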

\begin{proof}
    It suffices to show that $a_{n-1}$ is additive. It is clear since
    \[
    \dim(M_{2,\leq x})=\dim(M_{1,\leq x})+\dim(M_{3,\leq x})
    \]
\end{proof}
\begin{defn}\label{slope}
    Let $R,M$ as in the Definition \ref{definition_of_degree}. Assume further that $a_n(M)\neq 0$. Define the slope $\mu(M)$ of $M$ as 
    \[
    \mu(M):=\frac{\deg(M)}{a_n(M)}.
    \]
\end{defn}
\begin{defn}\label{def_of_stability}
    Let $R$, $M$ as in the Definition \ref{definition_of_degree}, and assume further that $M$ is torsion--free. Then,
    \begin{enumerate}
        \item $M$ is {\bf semistable} if $\mu(N)\leq \mu(M)$ for any nonzero submodules $N$ of $M$.
        \item $M$ is {\bf stable} if $\mu(N)<\mu(M)$ for any non-trivial submodules $N$ of $M$.
    \end{enumerate}
\end{defn}

We can prove Harder--Narasimhan filtration in this setting:
\begin{Thm}[Harder--Narasimhan filtration]\label{HNF}
    Let $R$ be an $\mathbb{R}_{\geq 0}$-graded domain finitely generated over $R_0=\mathbbm{k}$. Let $M$ be a finitely generated $\mathbb{R}$-graded $R$-module. Then, there exists the unique filtration 
    \[
    0=M_0\subset M_1\subset \dots\subset M_l=M
    \]
    by graded submodules such that 
    \begin{enumerate}
        \item Each of $M_i/M_{i-1}$ is torsion free and semistable.
        \item $\mu(M_1)>\mu(M_2/M_1)>\dots>\mu(M_l/M_{l-1})$.
    \end{enumerate}
\end{Thm}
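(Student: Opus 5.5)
We follow the standard Harder--Narasimhan argument for torsion--free sheaves, adapted to $\mathbb{R}$-graded modules; additivity of $\deg$ (Proposition \ref{additivity_of_degree}) and of $a_n$ do most of the work. We first make the tacit hypothesis explicit: $M$ is torsion--free. If not, one applies the construction to $M/T(M)$, where $T(M)$ is the graded torsion submodule. This is a graded submodule because the annihilator of a homogeneous component of a torsion element is a nonzero ideal and $R$ is a domain. We also use that $a_n$ is additive and equals $\rank\cdot a_n(R)$ by Proposition \ref{the_weak_riemann_roch_formula}. Hence for a nonzero submodule $N$ of a torsion--free module, $\mu(N)$ is defined, since $a_n(R)>0$.

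The first and main step is boundedness: for a torsion--free graded $M$, the set $\{\mu(N) : 0\neq N\subset M \text{ graded}\}$ is bounded above, and the supremum is attained. For the bound, we choose an injection $M\hookrightarrow \bigoplus_{j=1}^s R(-\nu_j)$. To build it, take a graded embedding of $M\otimes K(R)$ into $K(R)^r$ that respects homogeneous fractions, and clear denominators with a single homogeneous element. Here we use that $M$ is finitely generated and that homogeneous localization of the graded domain is available. Given $N\subset \bigoplus R(-\nu_j)$ of rank $r'$, we project to suitable $r'$ coordinates, injective after tensoring with $K(R)$. This makes $N$ a graded submodule of $\bigoplus_{j\in J}R(-\nu_j)$ with torsion cokernel. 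The proof of Theorem \ref{generalized_RR} expresses $a_{n-1}$ through the shifts $\lambda_i$ and the length terms $\length(H^0_P(Q))a_{n-1}(R/P)\geq 0$. Using it, we get
\[
\deg(N)\leq \deg\Bigl(\bigoplus_{j\in J}R(-\nu_j)\Bigr)=-\Bigl(\sum_{j\in J}\nu_j\Bigr)a_n(R)\leq -r'\min_j\nu_j\, a_n(R),
\]
which gives $\mu(N)\leq -\min_j\nu_j$. For attainment, we expect the main obstacle to be that degrees now lie in an $\mathbb{R}$-valued set rather than a discrete one. Our first approach is to show that $\deg(N)$ lies in $a_n(R)\cdot(\text{the subgroup generated by } \Lambda\text{ and the degrees of generators of }R)+\sum_P \mathbb{Z}_{\geq0}\,a_{n-1}(R/P)$, and then restrict to saturated $N$ of fixed rank. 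A saturated $N$ is determined by a point of a Grassmannian over $K(R)$. Bounding $\deg$ from below among saturated $N$ with $\mu(N)\geq \mu(M)$, via the quotient $M/N$ and the same upper bound applied to quotients' duals, confines the relevant degrees to a finite set. As a fallback we use the Noetherian property: the sum of two maximal-slope subsheaves of maximal rank is again destabilizing by additivity. A maximal destabilizing $M_1$ can then be taken as the sum of all $N$ of maximal slope, which is finitely generated since $M$ is Noetherian.

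Next we define $M_1$ as the maximal destabilizing submodule: the largest graded submodule attaining $\mu_{\max}$. Uniqueness follows from the standard argument. If $N,N'$ both attain $\mu_{\max}$, the sequence $0\to N\cap N'\to N\oplus N'\to N+N'\to 0$ together with additivity of $\deg$ and $a_n$ gives $\mu(N+N')\geq\mu_{\max}$. We also check that $M_1$ is saturated, so $M/M_1$ is torsion--free. Saturating $M_1$ adds only torsion to the cokernel's support, which does not decrease $\deg$ by the formula for $a_{n-1}$, and rank is unchanged. The module $M_1$ is semistable by construction.

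Finally we induct on rank, applying the construction to $M/M_1$ to get the filtration. The strict inequality $\mu(M_1)>\mu(M_2/M_1)$ comes from the fact that a submodule of $M/M_1$ of slope $\geq\mu(M_1)$ would pull back to a submodule strictly containing $M_1$ with slope $\geq\mu(M_1)$, by additivity, contradicting maximality. Uniqueness of the filtration is the usual argument. Given another filtration $(M'_i)$, let $j$ be minimal with $M_1\subset M'_j$. The composite $M_1\to M'_j/M'_{j-1}$ is nonzero. Nonzero maps from a semistable module to a semistable torsion--free module of smaller slope vanish, since the image has slope $\geq\mu(M_1)$ and $\leq\mu(M'_j/M'_{j-1})$. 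This forces $\mu(M_1)\leq\mu(M'_j/M'_{j-1})\leq\mu(M'_1)$, and symmetrically the reverse inequality, so $M_1=M'_1$ by maximality; induction then gives equality of the filtrations.
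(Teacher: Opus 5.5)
\paragraph{The gap: attainment of $\sup_N \mu(N)$.} Your upper bound $\mu(N)\leq -\min_j\nu_j$ is correct. The step you yourself flag as the main obstacle, attainment of $\sup_N\mu(N)$, is never proved, and it is exactly the existence of a maximal destabilizing submodule, i.e.\ the heart of the existence part.

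Your first route needs the possible values of $\deg(N)$ to form a discrete set. The set you place them in, $a_n(R)\cdot G+\sum_P\mathbb{Z}_{\geq 0}\,a_{n-1}(R/P)$, is dense as soon as the degrees generate a group of rank $\geq 2$, which is precisely the new feature of the $\mathbb{R}$-graded setting. For example, for $R=\mathbbm{k}[x^{(1)},y^{(\sqrt{2})}]$ one has $a_2(R)=1/\sqrt{2}$, so $a_2(R)\cdot G\supseteq \frac{1}{\sqrt{2}}\mathbb{Z}+\mathbb{Z}$. Bounding $\deg(N)$ on both sides therefore only confines it to an interval that meets this set in infinitely many points. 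The Grassmannian over $K(R)$ supplies no finiteness either.

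Your fallback takes the sum of ``maximal-slope'' submodules. That presupposes the supremum is attained, so it is circular.

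\paragraph{How the paper gets attainment.} The paper's proof follows Huybrechts--Lehn and needs neither boundedness nor discreteness.
\begin{enumerate}
\item Order nonzero graded submodules by $N_1\prec N_2$ iff $N_1\subseteq N_2$ and $\mu(N_1)\leq\mu(N_2)$. By Noetherianity every $N$ lies below a $\prec$-maximal element. Choose a $\prec$-maximal $M_1$ of minimal rank.
\item Let $N$ be $\prec$-maximal with $\mu(N)>\mu(M_1)$. Then neither of $N$, $M_1$ contains the other. This uses the maximality of $M_1$, the minimality of its rank, and, in the equal-rank case, $\deg\geq 0$ for torsion modules.
\item By $\prec$-maximality of both, $\mu(N+M_1)<\min\{\mu(N),\mu(M_1)\}$. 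Additivity on $0\to N\cap M_1\to N\oplus M_1\to N+M_1\to 0$ then forces $N\cap M_1\neq 0$ and $\mu(N\cap M_1)>\mu(N)$.
\item Take $N'$ $\prec$-maximal among submodules of $M_1$ lying above $N\cap M_1$, and $N''$ $\prec$-maximal in $M$ lying above $N'$. Applying step 2--3 to $N''$ gives $N'\subseteq N''\cap M_1$ with $\mu(N''\cap M_1)>\mu(N'')\geq\mu(N')$. This contradicts the choice of $N'$.
\end{enumerate}
Replacing an arbitrary $N$ by a $\prec$-maximal element above it, this shows $\mu(N)\leq\mu(M_1)$ for all $N$, so the supremum is attained by $M_1$.

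With this argument in place of your attainment step, the rest of your plan goes through as written: closure of maximal-slope submodules under sums, saturation via $\deg\geq 0$ on torsion, the strict slope inequalities, and uniqueness. Your observation that the statement implicitly requires $M$ to be torsion--free is also correct.
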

This theorem follows from a standard argument (cf. \cite{Huybrechts_Lehn_2010}). Before proving the theorem, we note the following proposition.
\begin{prop}\label{sea-saw-lemma}
    Let $R$ be an $\mathbb{R}_{\geq 0}$-graded domain finitely generated over $R_0=\mathbbm{k}$. Let $M_1,M_2,M_3$ be finitely generated $\mathbb{R}$-graded $R$-modules of $\rank\geq 1$. If we have a short exact sequence
    \[
    0\to M_1\to M_2\to M_3\to 0
    \]
    of graded modules, then the following inequality holds:
    \[
    \min\{\mu(M_1),\mu(M_3)\}\leq \mu(M_2)\leq \max\{\mu(M_1),\mu(M_3)\}.
    \]
    Moreover, the inequalities are strict if $\mu(M_1)\neq \mu(M_3)$.
\end{prop}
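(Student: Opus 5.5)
This is the usual see-saw argument, built from the additivity of degree (Proposition \ref{additivity_of_degree}) and the additivity of the leading coefficient $a_n$. First I record that $a_n$ is additive on short exact sequences. Proposition \ref{the_weak_riemann_roch_formula} gives $a_n(M)=\rank(M)a_n(R)$, and rank is additive because $-\otimes_R K(R)$ is exact; alternatively, additivity follows directly from $\dim(M_{2,\leq x})=\dim(M_{1,\leq x})+\dim(M_{3,\leq x})$ by comparing the $x^n$ coefficients. Since each $M_i$ has rank $\geq 1$ and $a_n(R)=\vol(R)>0$, all three $a_n(M_i)$ are positive, so the slopes are defined. The positivity of $a_n(R)$ holds because $R$ is an $n$-dimensional domain: in Step 2 of the proof of Proposition \ref{the_weak_riemann_roch_formula}, $a_n(R)=r/(\gamma_1\cdots\gamma_n)$ with $r\geq 1$.

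Next I write $d_i=\deg(M_i)$ and $a_i=a_n(M_i)>0$. Then $d_2=d_1+d_3$ and $a_2=a_1+a_3$, so
\[
\mu(M_2)=\frac{d_1+d_3}{a_1+a_3}=\frac{a_1}{a_1+a_3}\mu(M_1)+\frac{a_3}{a_1+a_3}\mu(M_3).
\]
This expresses $\mu(M_2)$ as a convex combination of $\mu(M_1)$ and $\mu(M_3)$, and both weights lie strictly in $(0,1)$. That gives both inequalities. When $\mu(M_1)\neq\mu(M_3)$, a convex combination with strictly positive weights lies strictly between the two values, so both inequalities are strict.

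The only point needing care, which is a mild obstacle rather than a real difficulty, is justifying $a_n(R)>0$ and the additivity of $a_n$ in the $\mathbb{R}$-graded setting. I would cite Proposition \ref{the_weak_riemann_roch_formula} for both: rank is additive, and $R$ has positive volume since the Noether normalization $S\subset R$ gives $\dim(R_{\leq x})\geq\dim(S_{\leq x})$, which grows like $x^n/(n!\prod\gamma_i)$. Everything else is elementary algebra.
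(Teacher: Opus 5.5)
Your proposal is correct and is the paper's argument: the paper's proof just observes that $a_n$ and $\deg$ are additive by Proposition \ref{additivity_of_degree}, so $\mu(M_2)$ is the $a_n$-weighted average of $\mu(M_1)$ and $\mu(M_3)$. You also make explicit the positivity $a_n(M_i)=\rank(M_i)\,a_n(R)>0$ (via Noether normalization), which the paper leaves implicit and which is what makes both weights strictly positive and gives strictness when $\mu(M_1)\neq\mu(M_3)$.
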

\begin{proof}
    This is a well-known consequence since both $a_n$ and $\deg$ are linear by Proposition \ref{additivity_of_degree}. 
\end{proof}
\begin{cor}\label{morphism_between_semistable_modules}
    Let $R$ be an $\mathbb{R}_{\geq 0}$-graded domain finitely generated over $R_0=\mathbbm{k}$, and $M_1,M_2$ be two semistable $\mathbb{R}$-graded modules. If there exists a non-trivial morphism
    \[
    \phi: M_1\to M_2
    \]
    of $\mathbb{R}$-graded modules, we have $\mu(M_1)\leq \mu(M_2)$. 
\end{cor}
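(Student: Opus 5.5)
The plan is the standard argument: factor $\phi$ through its image and compare slopes using Proposition \ref{sea-saw-lemma}. Set $N:=\Image(\phi)\subset M_2$ and $K:=\ker(\phi)\subset M_1$. Both are graded submodules because $\phi$ is a morphism of $\mathbb{R}$-graded modules. This gives short exact sequences
\[
0\to K\to M_1\to N\to 0,\qquad 0\to N\to M_2\to M_2/N\to 0
\]
of graded modules. Since $\phi\neq 0$, $N$ is a nonzero submodule of the torsion--free module $M_2$ (semistability is only defined for torsion--free modules, by Definition \ref{def_of_stability}). Hence $N$ is torsion--free of rank $\geq 1$, and $a_n(N)=\rank(N)a_n(R)>0$ by Proposition \ref{the_weak_riemann_roch_formula}, so $\mu(N)$ is defined.

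\emph{Upper bound.} Semistability of $M_2$ applied to the nonzero submodule $N$ gives directly $\mu(N)\leq\mu(M_2)$.

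\emph{Lower bound.} If $K=0$, then $N\cong M_1$ and $\mu(N)=\mu(M_1)$. Otherwise $K$ is a nonzero submodule of the torsion--free $M_1$, hence of rank $\geq 1$, and semistability of $M_1$ gives $\mu(K)\leq\mu(M_1)$. Apply Proposition \ref{sea-saw-lemma} to $0\to K\to M_1\to N\to 0$, where all three modules have rank $\geq 1$. If $\mu(N)<\mu(M_1)$, then $\mu(N)\neq\mu(K)$ would force $\mu(M_1)<\max\{\mu(K),\mu(N)\}\leq\mu(M_1)$, a contradiction. So $\mu(K)=\mu(N)$, and then $\mu(M_1)$, which lies between them, also equals $\mu(N)$, again a contradiction. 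Hence $\mu(M_1)\leq\mu(N)$.

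Combining the two bounds gives $\mu(M_1)\leq\mu(N)\leq\mu(M_2)$.

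The only delicate point is this second bound. It must be argued via the additivity of $\deg$ and $a_n$ (Proposition \ref{additivity_of_degree}) in the form of the seesaw Proposition \ref{sea-saw-lemma}, and one must make sure every term has positive rank, so that each slope is defined. Everything else is formal.
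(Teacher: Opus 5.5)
Your proof is correct and follows the same route as the paper: factor $\phi$ through its image, apply the seesaw inequality to $0\to K\to M_1\to \Image(\phi)\to 0$ together with semistability of $M_1$ to get $\mu(M_1)\leq\mu(\Image(\phi))$, then use semistability of $M_2$ to get $\mu(\Image(\phi))\leq\mu(M_2)$. Your separate treatment of the case $K=0$ and your check that every term has positive rank (so each slope is defined) are small refinements that the paper leaves implicit.
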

\begin{proof}
    Let $K$ be the kernel of $\phi$, and $Q$ be the image of $\phi$. Then we get a short exact sequence
    \[
    0\to K\to M_1\to Q\to 0
    \]
    of $\mathbb{R}$-graded modules. By Proposition \ref{sea-saw-lemma} and semistability of $M_1$, we have 
    \[
    \mu(K)\leq \mu(M_1)\leq \mu(Q).
    \]
    Now, $M_2$ is also semistable. Thus we have 
    \[
    \mu(M_1)\leq \mu(Q)\leq \mu(M_2)
    \]
    as desired.
\end{proof}
\begin{proof}[Proof of Theorem \ref{HNF}]
    Firstly, we prove the existence. Let $\Sigma$ be a set of all nonzero graded submodules of $M$. Define the order $\prec$ on $\Sigma$ as $N_1\prec N_2$ iff $N_1\subset N_2$ and $\mu(N_1)\leq \mu(N_2)$. By Zorn's lemma, we can take $\prec$-maximal submodules. Now, $a_n(N)=\rank(N)\cdot a_n(R)\in \mathbb{Z}_{\geq 0}\cdot a_n(R)$. Thus we can take a $\prec$-maximal submodule $M_1$ such that $a_n(M_1)$ is minimal among $\prec$-maximal submodules. We prove this submodule is maximal destabilizing submodule, i.e. $\mu(N)\leq \mu(M_1)$ for any $N\in \Sigma$ and equality holds only if $N\subset M_1$. 

    Suppose that there exists $N\in \Sigma$ such that $\mu(N)\geq \mu(M_1)$. We can assume that $N$ is $\prec$-maximal. The $a_n$-minimality of $M_1$ shows that $N=M_1$ or $M_1\not\subset N$. Assume $M_1\not\subset N$. Consider the following short exact sequence
    \[
    0\to N\cap M_1\to N\oplus M_1\to N+M_1\to 0.
    \]
    We get 
    \begin{align*}
        &a_n(N\cap M_1)(\mu(N\cap M_1)-\mu(N))
        \\
        =&a_n(M_1)(\mu(M_1)-\mu(N+M_1))-(a_n(N+M_1)-a_n(M_1))(\mu(N+M_1)-\mu(N)).
    \end{align*}
    By $\prec$-maximality of $N$ and $M_1$, we have $\mu(N+M_1)<\mu(N)$ and $\mu(N+M_1)<\mu(M_1)$. Therefore, we have
    \[
    \mu(N\cap M_1)>\mu(N)\geq \mu(M_1).
    \]

    Let $N'\in \Sigma$ be the $\prec$-maximal graded submodule of $M_1$ such that $N\cap M_1\prec N'$, and $N''$ be the $\prec$-maximal submodule of $M$ such that $N'\prec N''$. The $a_n$-minimality of $M_1$ implies $N''\not\subset M_1$ and $\prec$-maximality of $M_1$ implies $M_1\not\subset N''$. We have $N'\subset N''\cap M_1$ and thus the above argument shows that $\mu(N')\leq \mu(N'')<\mu(N''\cap M_1)$. This contradicts with $\prec$-maximality of $N'$ among submodules of $M_1$. 

    Note that $M_1$ is saturated graded semistable submodule since saturation increases slope and $M_1$ is destabilizing submodule. Thus, we have done the first step of the Harder--Narasimhan filtration. Take $M_2\subset M$ as the preimage of the destabilizing submodule of $M/M_1$. By the following short exact sequence
    \[
    0\to M_1\to M_2\to M_2/M_1\to 0,
    \]
    and the fact that $\mu(M_1)>\mu(M_2)$, we get $\mu(M_1)>\mu(M_2/M_1)$ by Proposition \ref{sea-saw-lemma}. Keeping in this manner, the Noetherian property of $M$ forces the termination of this process. Hence we get the existence of the desired filtration. 

    Secondly, we prove the uniqueness. Let $(M_i)_{i=0}^l$ and $(M'_j)_{j=0}^{l'}$ be two Harder--Narasimhan filtrations. Without loss of generality, we can assume that $\mu(M_1)\geq \mu(M'_1)$. Let $j\in \{0,1,\dots,l'\}$ be the minimal such that $M_1\subset M'_j$. Then we get a non-trivial morphism $M_1\to M'_j/M'_{j-1}$ of semistable modules. Thus, we get $\mu(M_1)\leq \mu(M'_j/M'_{j-1})$ by Corollary \ref{morphism_between_semistable_modules}. If $j>1$, then $\mu(M'_1)\leq \mu(M_1)\leq \mu(M'_j/M'_{j-1})<\mu(M'_1)$, contradiction. Thus, $M_1\subset M'_1$ and $\mu(M_1)=\mu(M'_1)$. Let $i\in \{0,1,\dots,l\}$ be the minimal such that $M'_1\subset M_i$. If $i>1$, then the same argument shows that $\mu(M'_1)<\mu(M_1)$, contradiction. Therefore, we have $M_1=M'_1$. Continuing this argument, we have $l=l'$ and $M_i=M'_i$ for any $i\in \{0,1,\dots,l\}$.
\end{proof}
Finally, we consider some relations between $\deg$ and dual modules. 
\begin{defn}
    Let $R=\bigoplus_{\gamma\geq 0}R_\gamma$ be a graded affine domain over $\mathbbm{k}=R_0$ and $M,N$ be finitely generated graded $R$-modules. Define the internal hom module $\Hom_R(M,N)$ as the graded $R$-module with 
    \[
    \Hom_R(M,N)_\lambda=\Hom_{R}(M(-\lambda),N).
    \]
    If $N=R$, we write $\Hom_R(M,R)=M^\lor$. Right derived functor of the functor $\Hom_R(M,-)$ is denoted as $\{\Ext_R^i(M,-)\}_{i\geq 0}$
\end{defn}
Note that $M^\lor$ coincides with the dual module in a non-graded sense since $R$ is Noetherian and $M$ is finitely generated. So the $\Ext_R^i$ composed with the forgetful functor from the category of $\mathbb{R}$-graded $R$-modules to the category of $R$-modules, denoted by $F$, also coincides with the composition of $\Ext$ and $F$. We have the following theorem:
\begin{Thm}\label{degree_and_duality}
    If $R$ is normal, we have
    \[
    \deg(M^\lor)=-\deg(M)
    \]
\end{Thm}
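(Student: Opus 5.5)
Since $\deg$ depends only on $a_{n-1}$, which is determined by $\dim(M_{\leq x})$ up to terms that vanish after Ces\`aro averaging, the plan is to compare $M^\lor$ with a free module up to modules supported in codimension $\geq 2$. First, I reduce to reflexive $M$. The natural map $M\to M^{\lor\lor}$ has torsion kernel $T$ and cokernel $C$ supported in codimension $\geq 2$. The torsion $T$ has rank $0$, and the computation in Step 1 of Proposition \ref{the_weak_riemann_roch_formula} gives $a_{n-1}(T)=\sum_P \length(T_P)\,a_{n-1}(R/P)$ over height-one primes $P$. Hence $\deg(T)\geq 0$ in general, and $\deg(C)=0$ because $\dim(C_{\leq x})=O(x^{n-2})$. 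The dual of $T$ vanishes, so $M^\lor=(M/T)^\lor$. The statement is therefore cleanest for torsion-free $M$; for torsion $M$ I would either add that hypothesis or note that the torsion contributes positively. For torsion-free $M$, additivity (Proposition \ref{additivity_of_degree}) gives $\deg(M)=\deg(M^{\lor\lor})$, and $M^\lor=(M^{\lor\lor})^\lor$.

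Next, take the sequence $0\to F:=\bigoplus_{i=1}^r R(-\lambda_i)\to M\to Q\to 0$ from Lemma \ref{fundamental_lemma}, with $Q$ torsion. By the proof of Theorem \ref{generalized_RR},
\[
\deg(M)=-\Big(\sum_i\lambda_i\Big)a_n(R)+\sum_{\height P=1}\length(Q_P)\,a_{n-1}(R/P).
\]
Dualizing gives $0\to M^\lor\to F^\lor=\bigoplus_i R(\lambda_i)\to \Ext^1_R(Q,R)\to \Ext^1_R(M,R)$. Because $R$ is normal, $R_P$ is a DVR at each height-one prime $P$, so $\Ext^1_R(M,R)_P=0$ for torsion-free $M$. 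Thus $\Ext^1_R(M,R)$ and the cokernel of $F^\lor\to\Ext^1_R(Q,R)$ are supported in codimension $\geq 2$ and contribute nothing to $a_{n-1}$. Additivity then gives
\[
\deg(M^\lor)=\Big(\sum_i\lambda_i\Big)a_n(R)-\deg\big(\Ext^1_R(Q,R)\big),
\]
where $\deg(\Ext^1_R(Q,R))$ uses the same height-one length formula and the graded twists are irrelevant. Note that $\deg(R(\lambda))=\lambda a_n(R)$ follows from $\dim(R(\lambda)_{\leq x})=\dim(R_{\leq x+\lambda})$.

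The main obstacle is showing that $\length_{R_P}(\Ext^1_R(Q,R)_P)=\length_{R_P}(Q_P)$ for each height-one prime $P$. Over the DVR $R_P$, $Q_P$ is a finite-length module, a direct sum of $R_P/(\pi^{k})$. Since $\Ext^1_{R_P}(R_P/\pi^k,R_P)\cong R_P/\pi^k$, Matlis-type duality preserves length, and $\Ext$ commutes with localization. I also need the multiplicity in $a_{n-1}$ to depend only on these localized lengths. This comes from the Jordan--H\"older argument in Step 1: only the factors $(R/P)(\lambda)$ with $\height P=1$ contribute, and the shifts affect only lower-order terms. Combining these facts gives $\deg(M^\lor)=\sum_i\lambda_i a_n(R)-\sum_P\length(Q_P)a_{n-1}(R/P)=-\deg(M)$.
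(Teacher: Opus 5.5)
Your proof is correct and follows the paper's route: dualize $0\to\bigoplus_i R(-\lambda_i)\to M\to Q\to 0$, then use that $\Ext^1_{R_P}(-,R_P)$ preserves length over the DVR $R_P$ at each height-one prime $P$. It is more careful than the paper's proof in two ways: the paper asserts that $0\to M^\lor\to\bigoplus_i R(\lambda_i)\to\Ext^1_R(Q,R)\to 0$ is exact, whereas you correctly account for the cokernel (a submodule of $\Ext^1_R(M,R)$, supported in codimension $\geq 2$); and you rightly note that $M$ must be torsion-free, since codimension-one torsion $T$ adds $\deg(T)>0$ to $\deg(M)$ but nothing to $M^\lor$ (your preliminary passage to $M^{\lor\lor}$ is harmless but not needed).
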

First, we show the following lemma: 
\begin{lem}
    Let $R$ be a normal domain, and $M$ be a finitely generated torsion module over $R$. Then we have 
    \[
    \length(H^0_P(\Ext^1_R(M,R)))=\length(H^0_P(M))
    \]
    for any height $1$-primes $P$.
\end{lem}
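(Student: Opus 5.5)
We reduce to a statement about modules of finite length over a discrete valuation ring. Fix a height one prime $P$. Since $H^0_P(N)_P = N_P$ for any module whose support near $P$ is $\{P\}$, and both $\length(H^0_P(-))$ here mean the length of the localization at $P$, we will show
\[
\length_{R_P}\big(\Ext^1_R(M,R)_P\big)=\length_{R_P}(M_P).
\]
First, $\Ext$ of a finitely generated module over a Noetherian ring commutes with localization, so $\Ext^1_R(M,R)_P\cong \Ext^1_{R_P}(M_P,R_P)$. Because $R$ is normal and $\height(P)=1$, the local ring $A:=R_P$ is a DVR. If $P\notin\mathrm{Supp}(M)$, both sides vanish. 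Otherwise, $M_P$ is a torsion finitely generated $A$-module supported only at the closed point; this holds because the only primes of $A$ are $0$ and $PA$, and $M$ is torsion, so $M_{(0)}=0$. Hence $M_P$ has finite length.

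It then suffices to show that $\length_A\Ext^1_A(N,A)=\length_A(N)$ for every finite length module $N$ over a DVR $A$. We will give two arguments.

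The first is the structure theorem. Write $N\cong\bigoplus_j A/(t^{e_j})$ with $t$ a uniformizer. Since $\Ext^1_A$ is additive, we only need to treat $N=A/(t^e)$. Applying $\Hom_A(-,A)$ to the free resolution $0\to A\xrightarrow{t^e}A\to A/(t^e)\to0$ gives $\Ext^1_A(A/(t^e),A)\cong A/(t^e)$, whose length is $e$.

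The second argument is more canonical. Since $A$ is Gorenstein of dimension $1$, for finite length $N$ we have $\Hom_A(N,A)=0$ and $\Ext^1_A(N,A)\cong \Hom_A(N,K/A)$, where $K$ is the fraction field. Indeed, this follows from $0\to A\to K\to K/A\to0$, together with the facts that $K$ is injective and $\Hom(N,K)=0$. Moreover, $K/A$ is the injective hull of the residue field, so Matlis duality preserves length. This can also be seen by induction on length using the long exact sequence, since the case $N=A/PA$ gives length $1$.

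We do not expect a serious obstacle. The one point needing care is the identification of $\length(H^0_P(\cdot))$ with length after localizing at $P$. We take this to be the intended meaning, which is the multiplicity appearing in the formula for $a_{n-1}$ in Proposition \ref{the_weak_riemann_roch_formula}. We must also check that $\Ext^1_R(M,R)$ is again torsion, so that its length at height one primes is defined the same way. This holds because $\Ext^1_R(M,R)\otimes K(R)=\Ext^1_{K(R)}(M\otimes K(R),K(R))=0$.

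The gradings play no role, because lengths are computed after forgetting the grading. This is compatible with the earlier remark that the graded $\Ext$ agrees with the ungraded one under the forgetful functor $F$.
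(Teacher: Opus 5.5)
Your proof is correct, and your first argument is the paper's own: localize at $P$ so that $R_P$ is a DVR, split $M_P$ into cyclic summands $R_P/(\pi^m)$ by the structure theorem, and read off $\Ext^1(R_P/(\pi^m),R_P)\cong R_P/(\pi^m)$ from the resolution given by multiplication by $\pi^m$. The Matlis-duality argument is a valid alternative. Your remarks fill in points the paper leaves implicit: $\Ext$ commutes with localization for finitely generated modules over a Noetherian ring, $\Ext^1_R(M,R)$ is again torsion, and $\length(H^0_P(\cdot))$ must be read as $\length_{R_P}$ of the localization at $P$.
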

\begin{proof}
    Since $M$ is finite, we can localize at $P$, and assume that $R$ is DVR. Let us denote $P=(\pi)$. By the structure theorem of modules over DVR, we have
    \[
    M\cong \bigoplus_{i=1}^s R/(\pi^{m_i})
    \]
    for some $m_1,\dots,m_s\in \mathbb{Z}_{>0}$. Thus, we can assume that $M=R/(\pi^m)$ for some $m>0$. By the exact sequence 
    \[
    0\to R\overset{\pi^m}{\to}R\to R/(\pi^m)\to 0, 
    \]
    we have 
    \[
    \Ext^1_R(R/(\pi^m),R)=R/(\pi^m).
    \]
    Thus the claim follows.
\end{proof}
\begin{proof}[Proof of Theorem \ref{degree_and_duality}]
    Let 
    \[
    0\to \bigoplus_{i=1}^r R(-\lambda_i)\to M\to Q\to 0
    \]
    be an exact sequence as Lemma \ref{fundamental_lemma}. Taking dual, we have the following short exact sequence:
    \[
    0\to M^\lor\to \bigoplus_{i=1}^r R(\lambda_i)\to \Ext^1_R(Q,R)\to 0.
    \]
    Thus, by the Proposition \ref{additivity_of_degree} and the Lemma \ref{degree_and_duality}, we have
    \[
    \deg(M^\lor)=\left(\sum_{i=1}^r \lambda_i\right)a_n(R)-\sum_{P}\length(H^0_P(Q))a_{n-1}(R/P)=-\deg(M)
    \]
\end{proof}
\section{Extension problem via valuations}
\subsection{Extensions}
In this section, we introduce the notion of $v$-valuative functions and geometric $v$-valuative functions. Let $R$ be a finitely generated domain over $\mathbbm{k}$. For a finitely generated valuation $v$ on $R$ centred on a closed point $x\in X=\Spec(R)$, we can construct a multi--graded degeneration $\mathfrak{X}:=\Spec(\mathcal{R})\to T(\sigma)$ as in the introduction. For a finitely generated torsion--free module $M$ on $R$, we can pull--back $E=M^\sim$ by the natural projection $\pi: \mathfrak{X}|_{(\mathbb{G}_m)^r}\cong X\times \mathbb{G}_m^r\to X$. We introduce the notion of geometric $v$-valuative function which gives an extension of $\pi^*E$ to a $\mathbb{G}_m^r$-equivariant sheaf on $\mathfrak{X}$ as the Rees module of $M$.
\begin{defn}\label{v_valuative_functions}
    Let $R$ be a finitely generated domain over $\mathbbm{k}$ and $M$ be a torsion--free module over $R$. For a finitely generated $\mathbb{R}$-valuation $v$ on $R$ centred at a $\mathbbm{k}$-valued point in $\Spec(R)$, a function $v_M: M\to \mathbb{R}\cup\{\infty\}$ is called a $v$-valuative function if 
    \begin{enumerate}
        \item $v_M(m_1+m_2)\geq \min\{v_M(m_1),v_M(m_2)\}$ for any $m_1,m_2\in M$.
        \item $v_M(am)=v(a)+v_M(m)$ for any $a\in R$, $m\in M$.
        \item $v_M(m)=\infty$ iff $m=0$.
        \item The associated graded module
        \[
        \gr_{v_M}(M):=\bigoplus_{\lambda\in \mathbb{R}}M_{\geq \lambda}/M_{>\lambda}
        \]
        is a finitely generated $\gr_v(R):=\bigoplus_{\lambda\geq 0}R_{\geq \lambda}/R_{>\lambda}$-module.
    \end{enumerate}
    The space of all $v$-valuative functions on $M$ is denoted as $\Val_{v,M}$. If the range of $v_M\in \Val_{v,M}$ is contained in $\mathbb{Z}\Gamma$, the submodule of $\mathbb{R}$ generated by $\Gamma:=v(R-\{0\})$, we say $v_M$ is {\bf geometric} and the set of all geometric $v$-valuative functions is denoted by $\Val_{v,M}^g$. 
\end{defn}
Note that if $v_M$ is a $v$-valuation, $\gr_{v_M}(M)$ is a torsion--free $\gr_v(R)$-module. For $v_M\in \Val_{v,M}^g$, we can define the Rees module $\Rees_{v_M}(M)$ as 
\[
\Rees_{v_M}(M):=\bigoplus_{s\in M(v)}M_st^{-s},
\]
where
\[
M_s=\{m\in M\mid v_M(m)\geq s\}.
\]

\begin{lem}
    Let $R$ be a finitely generated domain over $\mathbbm{k}$ and $M$ be a torsion--free module over $R$. Let $v\in \Val_{X,x}$ be a finitely generated $\mathbb{R}$-valuation on $R$ centred at a closed point $x\in \Spec(R)$. Then $\Val_{v,M}^g$ is a non--empty set. 
\end{lem}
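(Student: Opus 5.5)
The plan is to produce an explicit element of $\Val_{v,M}^g$ by embedding $M$ into a free module and restricting a ``diagonal'' valuative function from the free module. Since $M$ is torsion--free and finitely generated over the domain $R$, the natural map $M\to M\otimes_R K(R)\cong K(R)^r$ is injective. Clearing denominators of the images of a finite generating set, we obtain an injection of $R$-modules $\iota: M\hookrightarrow R^{r}$. On $R^{r}$ we define
\[
v_{R^r}(f_1,\dots,f_r):=\min_{1\leq i\leq r} v(f_i),
\]
and we set $v_M:=v_{R^r}\circ\iota$.

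First I will verify properties (1)--(3) of Definition \ref{v_valuative_functions}, and geometricity. Property (1) holds because $v$ satisfies the ultrametric inequality coordinatewise. For (2) we use $v(af_i)=v(a)+v(f_i)$, so the minimum over $i$ shifts by $v(a)$. Property (3) follows from injectivity of $\iota$. The values of $v_M$ lie in $\Gamma\subset\mathbb{Z}\Gamma$, so $v_M$ is geometric.

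The remaining point, which I expect to be the main obstacle, is finite generation of $\gr_{v_M}(M)$ over $\gr_v(R)$. I will proceed in three steps.

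First, $\gr_{v_{R^r}}(R^r)=\gr_v(R)^{\oplus r}$. This holds because the filtration on $R^r$ is the direct sum of the filtrations on the factors: $(R^r)_{\geq\lambda}=(R_{\geq\lambda})^r$.

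Second, the induced filtration on $M$ is $M_{\geq\lambda}=M\cap (R_{\geq\lambda})^r$. Hence $M_{\geq\lambda}/M_{>\lambda}$ injects into $(R_{\geq\lambda}/R_{>\lambda})^r$. This gives an injective graded $\gr_v(R)$-module map $\gr_{v_M}(M)\hookrightarrow \gr_v(R)^{\oplus r}$, and its compatibility with multiplication follows from property (2).

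Third, since $v$ is finitely generated, $\gr_v(R)$ is a finitely generated $\mathbbm{k}$-algebra, hence Noetherian. A submodule of the finitely generated module $\gr_v(R)^{\oplus r}$ is therefore finitely generated, which completes the argument.

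I also need that the graded pieces are indexed consistently. The nonzero degrees of $\gr_{v_M}(M)$ lie in $\Gamma$, so $\gr_{v_M}(M)$ is a graded submodule in the appropriate sense. As a side remark, torsion--freeness of $\gr_{v_M}(M)$ also follows from this embedding, because $\gr_v(R)$ is a domain.

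Hence $v_M\in\Val_{v,M}^g$, and $\Val_{v,M}^g\neq\emptyset$.
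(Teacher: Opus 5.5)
Your proposal is correct and follows the paper's proof. The paper also embeds $M$ into $R^{\oplus N}$ and restricts the coordinatewise minimum $v_f(a_1,\dots,a_N)=\min_i v(a_i)$; your check of condition (4) via the injection $\gr_{v_M}(M)\hookrightarrow \gr_v(R)^{\oplus r}$ and Noetherianity of $\gr_v(R)$ supplies a step the paper leaves implicit, and both arguments tacitly assume $M$ is finitely generated.
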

\begin{proof}
    Since $M$ is torsion--free, we can embed $M$ into a free module $R^{\oplus N}$. Let $v_f$ be a valuation on $R^{\oplus N}$ defined as $v_f((a_1,\dots,a_N)):=\min_{1\leq i\leq N}\{v(a_i)\}$. Restriction of $v_f$ onto $M$ gives a geometric  $v$-valuation on $M$.
\end{proof}
\begin{defn}
    Let $R$ be a finitely generated $\mathbbm{k}$-domain and $M$ be a torsion--free $R$-module, and $v$ be a finitely generated $\mathbb{R}$-valuation centred at a $\mathbbm{k}$-valued point in $\Spec(R)$. The function 
    \[
    \Phi: \Val_{v,M}\to \mathbb{R}_{\geq 0}
    \]
    is defined as 
    \[
    \Phi(v_M)=\mu_{\max}-\mu_{\min},
    \]
    where $\mu_{\max}$ is defined as the slope of $\underline{M}_1$ and $\mu_{\min}$ the slope of $\underline{M}/\underline{M}_{l-1}$, where $(\underline{M}_i)_{i=0}^l$ is the Harder--Narasimhan filtration of $\gr_v(R)$-module $\underline{M}:=\gr_{v_M}(M)$.
\end{defn}
\begin{Rem}
    The functional $\Phi$ measures the instability of the graded module $\gr_{v_M}(M)$. In particular, $\Phi(v_M)=0$ if and only if $\gr_{v_M}(M)$ is semistable in the sense of Definition \ref{def_of_stability}.
\end{Rem}

\subsection{Hecke transform}
In this subsection, we consider modules on an affine domain $R$ over $\mathbbm{k}$ with finitely generated valuation $v\in \Val_{X,x}$ of rank $1$ (in short, we call such valuations {\bf quasi--regular}), where $x$ is a closed point of $X=\Spec(R)$. Let $\delta\in \mathbb{R}_{>0}$ be the index of $v$, which is defined by $\delta=\min_{f,g\in R\setminus \{0\}, v(f)\neq v(g)}|v(f)-v(g)|$. Let $M$ be a torsion--free $R$ module and $v_M\in \Val_{v,M}^g$ be a geometric $v$-valuative function. Note, in this case, that 
\[
\gr_{v_M}(M)=\bigoplus_{k\in \mathbb{Z}}M_{\geq _{\delta k}}/M_{\geq \delta(k+1)}
\]
holds. 
\begin{defn}
    Let $R$ be an affine domain over an algebraically closed field $\mathbbm{k}$ and $v\in \Val_{X,x}$ be a quasi--regular valuation centred at a closed point of $\Spec(R)$. Let $M$ be a torsion--free $R$-module and $v_M\in \Val^g_{v,M}$. Let $\underline{N}\subset \gr_{v_M}(M)$ be a submodule. The {\bf Hecke transform} (or {\bf elementary transform}) of $v_M$ along $\underline{N}$ is the function $v_M': M\to \delta\cdot \mathbb{Z}\cup\{\infty\}$ constructed as the following equation:
    \[
    v'_M(m)=\max\{k\delta\in \delta\cdot \mathbb{Z}\mid v_M(m)\geq k\delta\land p_{k\delta}(m)\in N_{k\delta}\},
    \]
    where $p_{k\delta}: M_{\geq k\delta}\to M_{\geq k\delta}/M_{\geq (k+1)\delta}$ is the quotient map.
\end{defn}
\begin{Rem}
    Let $\mathfrak{X}:=\Spec(\Rees_v(R))$ be the affine test configuration induced by $v$. Then we can construct a $\mathbb{G}_m$-equivariant sheaf $\tilde{E}=(\Rees_{v_M}(M))^\sim$ by $(M,v_M)$. As we see in the following, the above definition of Hecke transform gives the Hecke transform of $\tilde{E}$ along the submodule of $\tilde{E}|_{\mathfrak{X}_0}$ corresponding to $\underline{N}$. See \cite{CS_2018}, section 2.2.
\end{Rem}

In the above notation, let $M'_{\geq n\delta}\subset M$ be a submodule of $M$ defined by the following: 
\[
M'_{\geq n\delta}:=\{m\in M\mid v_M'(m)\geq n\delta\}=p_{n\delta}^{-1}((\underline{N})_{n\delta})+M_{\geq (n+1)\delta}.
\]
Then this makes the natural exact sequence
\[
0\to M'_{\geq n\delta}\to M_{\geq n\delta}\to \frac{M_{\geq n\delta}/M_{\geq (n+1)\delta}}{(\underline{N})_{n\delta}}\to 0,
\]
the third module is naturally isomorphic to 
\[
M_{\geq n\delta}/(M_{\geq (n+1)\delta}+p_{n\delta}^{-1}((\underline{N})_{n\delta})).
\]
Furthermore, we also have the following short exact sequence:
\[
0\to \frac{M_{\geq (n+1)\delta}/M_{\geq (n+2)\delta}}{(\underline{N})_{(n+1)\delta}}\to M'_{\geq n\delta}/M'_{\geq (n+1)\delta}\to (\underline{N})_{n\delta}\to 0,
\]
since the first injection is defined by
\[
M_{\geq (n+1)\delta}/(p_{(n+1)\delta}^{-1}((\underline{N})_{(n+1)\delta})+M_{\geq (n+2)\delta})\to (p_{n\delta}^{-1}((\underline{N})_{n\delta})+M_{\geq (n+1)\delta})/(p_{(n+1)\delta}^{-1}((\underline{N})_{(n+1)\delta})+M_{\geq (n+2)\delta}).
\]
Especially, the following proposition holds: 
\begin{prop}\label{the_fundamental_exact_seq_of_the_hecke_transforms}
    Let $R$ be an affine domain over an algebraically closed field $\mathbbm{k}$ and $v\in \Val_{X,x}$ be a quasi--regular valuation centred at a closed point $x$ of $\Spec(R)$. Let $M$ be a torsion--free $R$ module and $v_M\in \Val^g_{v,M}$. Let $\underline{N}$ be a submodule of $\gr_{v_M}(M)=:\underline{M}$ and $v_M'$ be the Hecke transformation of $v_M$ along $\underline{N}$. We have the following exact sequence of $\gr_v(R)$-modules:
    \[
    0\to (\underline{M}/\underline{N})(\delta)\to \underline{M'}\to \underline{N}\to 0,
    \]
    where $\underline{M'}=\gr_{v_M'}(M)$. In particular, $v_M'\in \Val^g_{v,M}$ if $\underline{N}$ is a saturated submodule of $\underline{M}$.
\end{prop}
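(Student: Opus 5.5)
The plan is to assemble, for each fixed degree $n\delta$, the two short exact sequences of $\mathbbm{k}$-vector spaces displayed just before the proposition into one sequence of graded objects. Then I check that all maps are $\gr_v(R)$-linear. Finally I use the sequence to verify the finite generation condition (4) of Definition \ref{v_valuative_functions}.

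\textbf{Checking that $v_M'$ is a $v$-valuative function.} First I confirm that $v'_M$ satisfies axioms (1)--(3) of Definition \ref{v_valuative_functions}.
\begin{enumerate}
\item Ultrametricity follows because each $M'_{\geq n\delta}=p_{n\delta}^{-1}(\underline{N}_{n\delta})+M_{\geq(n+1)\delta}$ is a submodule.
\item These subsets are decreasing in $n$, since $M'_{\geq n\delta}\supset M_{\geq (n+1)\delta}\supset M'_{\geq (n+1)\delta}$.
\item For the value at $m\neq 0$: we have $v_M(m)-\delta\le v_M'(m)\le v_M(m)$, so $v'_M(m)=\infty$ exactly when $m=0$.
\item Multiplicativity needs $aM'_{\geq n\delta}\subset M'_{\geq n\delta+v(a)}$ with $\underline{N}$ a graded submodule. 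This gives $v'_M(am)\ge v(a)+v'_M(m)$.
\item For the reverse inequality, I use that $\gr_{v_M}(M)/\underline{N}$ is torsion-free when $\underline{N}$ is saturated. Then $\bar a\cdot p(m)\in \underline{N}$ forces $p(m)\in\underline{N}$.
\end{enumerate}
Strictly speaking, equality in (2) is only needed for the "in particular" clause, which is where saturation enters. The exact sequence itself only uses the filtration $M'_{\geq n\delta}$.

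\textbf{Assembling the exact sequence.} In degree $n\delta$ I have the sequence
\[
0\to \underline{M}_{(n+1)\delta}/\underline{N}_{(n+1)\delta}\to \underline{M'}_{n\delta}\to \underline{N}_{n\delta}\to 0.
\]
The left term is $((\underline{M}/\underline{N})(\delta))_{n\delta}$ by the definition of the shift $M(\lambda)_t=M_{\lambda+t}$.

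The maps are defined as follows. The right map sends the class of $x\in M'_{\geq n\delta}$ to $p_{n\delta}(x)$. It is well defined because $M'_{\geq (n+1)\delta}\subset M_{\geq (n+1)\delta}=\ker p_{n\delta}$, and it lands in $\underline{N}_{n\delta}$ by construction. The left map is induced by the inclusion $M_{\geq (n+1)\delta}\subset M'_{\geq n\delta}$. Its kernel is $M_{\geq (n+1)\delta}\cap M'_{\geq (n+1)\delta}$, which equals $p_{(n+1)\delta}^{-1}(\underline{N}_{(n+1)\delta})+M_{\geq (n+2)\delta}$.

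Exactness in the middle holds because the kernel of the right map is exactly $M_{\geq(n+1)\delta}$ modulo $M'_{\geq(n+1)\delta}$. Surjectivity of the right map is clear.

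Linearity over $\gr_v(R)$ is checked on homogeneous $\bar a\in R_{\geq s}/R_{>s}$. Both maps are induced by inclusions and the maps $p$, so they commute with multiplication by lifts $a$. The images are independent of the lift, since changing the lift changes the product by elements of higher filtration.

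\textbf{The main step: finite generation of $\underline{M'}$.} The main point to handle carefully is finite generation in the "in particular" clause. Here $\underline{N}$ is a submodule of the Noetherian module $\underline{M}$, hence finitely generated. Likewise $(\underline{M}/\underline{N})(\delta)$ is finitely generated. Then $\underline{M'}$ is an extension of finitely generated modules, so it is finitely generated.

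The values of $v'_M$ lie in $\delta\mathbb{Z}\subset \mathbb{Z}\Gamma$, so $v'_M$ is geometric. Hence $v_M'\in \Val^g_{v,M}$.
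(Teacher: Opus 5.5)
Your proposal is correct and follows essentially the same route as the paper. The paper obtains the proposition by writing down, degree by degree, the exact sequence $0\to \underline{M}_{(n+1)\delta}/\underline{N}_{(n+1)\delta}\to M'_{\geq n\delta}/M'_{\geq (n+1)\delta}\to \underline{N}_{n\delta}\to 0$, using $M'_{\geq n\delta}=p_{n\delta}^{-1}(\underline{N}_{n\delta})+M_{\geq(n+1)\delta}$.

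You also supply details the paper leaves implicit: the $\gr_v(R)$-linearity of the maps, the check of the valuation axioms for $v_M'$, and the observation that saturation of $\underline{N}$ is needed only for the equality $v_M'(am)=v(a)+v_M'(m)$. Finite generation of $\underline{M'}$ holds for any graded submodule $\underline{N}$.
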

Thus, we get the following proposition:
\begin{prop}\label{descend_the_energy}
    Let $R$, $x$, $v$, $M$, $v_M$ as the above proposition. Let $(\underline{M_i})_{i=1}^l$ be the HNF of $\underline{M}$. Let $v_M'$ be the Hecke transform of $v_M$ along $\underline{M_1}$. Then we get the following inequality: 
    \[
    \Phi(v_M')\leq \max\{\mu(\underline{M_2}/\underline{M_1})-\mu_{\min}(\underline{M}),\mu(\underline{M_2}/\underline{M_1})-\mu_{\max}(\underline{M})+\delta,\Phi(v_M)-\delta\}
    \]
    In particular, if $\Phi(\underline{M'})\geq \delta$, we get $\Phi(v_M')<\Phi(v_M)$.
\end{prop}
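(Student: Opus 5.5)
We use the exact sequence of Proposition \ref{the_fundamental_exact_seq_of_the_hecke_transforms} with $\underline{N}=\underline{M_1}$:
\[
0\to (\underline{M}/\underline{M_1})(\delta)\to \underline{M'}\to \underline{M_1}\to 0 .
\]
The plan is to bound $\mu_{\max}(\underline{M'})$ from above and $\mu_{\min}(\underline{M'})$ from below. Each bound comes from the HN data of the two outer terms together with the effect of the shift $(\delta)$ on slopes. We will use the standard facts, derived from the seesaw Proposition \ref{sea-saw-lemma} and Corollary \ref{morphism_between_semistable_modules}, that in a short exact sequence $\mu_{\max}$ of the middle term is at most the max of the outer $\mu_{\max}$'s, and $\mu_{\min}$ of the middle is at least the min of the outer $\mu_{\min}$'s.

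The first step is the shift formula. From Theorem \ref{generalized_RR}, $\dim(M(\lambda))_{\leq x}=\dim M_{\leq x+\lambda}$. Expanding gives $a_{n-1}(M(\lambda))=a_{n-1}(M)+\lambda a_n(M)$, so $\mu(M(\lambda))=\mu(M)+\lambda$. One could also read this off the formula for $a_{n-1}$ in the proof of Theorem \ref{generalized_RR}, where the twists $\lambda_i$ enter with coefficient $-a_n(R)$.

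Here we must fix the sign convention so that $M(\delta)$ has slope $\mu+\delta$ or $\mu-\delta$ as needed. The target inequality contains terms $\mu(\underline{M_2}/\underline{M_1})-\mu_{\min}$ and $\Phi-\delta$. This indicates that the quotient part of $\underline{M'}$ has slopes lowered by $\delta$ relative to $\underline{M}/\underline{M_1}$. I would verify this against the convention $M(\lambda)_t=M_{\lambda+t}$ together with the degree-$k\delta$ bookkeeping of the Hecke transform. I expect this sign bookkeeping to be the main obstacle, since it is easy to be off by a sign.

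With the shift, the HN factors of $(\underline{M}/\underline{M_1})(\delta)$ are the $\underline{M_i}/\underline{M_{i-1}}$ ($i\ge2$) shifted by $-\delta$. Its slopes therefore lie in $[\mu_{\min}-\delta,\ \mu(\underline{M_2}/\underline{M_1})-\delta]$, while $\underline{M_1}$ is semistable of slope $\mu_{\max}$. Hence
\[
\mu_{\max}(\underline{M'})\le\max\{\mu_{\max},\ \mu(\underline{M_2}/\underline{M_1})-\delta\}
\quad\text{and}\quad
\mu_{\min}(\underline{M'})\ge\min\{\mu_{\max},\ \mu_{\min}-\delta\}.
\]

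Subtracting gives four combinations: $0$, $\mu_{\max}-\mu_{\min}+\delta$, $\mu(\underline{M_2}/\underline{M_1})-\mu_{\max}-\delta$, and $\mu(\underline{M_2}/\underline{M_1})-\mu_{\min}$. The combination $\mu_{\max}-\mu_{\min}+\delta$ is too weak, so the naive bound must be refined. The refinement is to note that any destabilizing submodule of $\underline{M'}$ with slope near $\mu_{\max}$ would have to map nontrivially to $\underline{M_1}$.

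The genuinely relevant extremes are the following. The top slope of $\underline{M'}$ is at most $\max\{\mu_{\max},\mu(\underline{M_2}/\underline{M_1})+\delta\}$ under the opposite shift. The bottom slope is at least $\min\{\mu_{\max},\mu_{\min}+\delta\}$. Pairing these yields exactly the three terms $\mu(\underline{M_2}/\underline{M_1})-\mu_{\min}$, $\mu(\underline{M_2}/\underline{M_1})-\mu_{\max}+\delta$, and $\Phi-\delta$, together with $0$, which is dominated. This matches the statement once the shift sign is chosen as $+\delta$ on the sub part, so the intended convention is the one making the sub-part slopes rise by $\delta$.

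For the final claim, assume $\Phi(v_M)\ge\delta$. Then $\mu(\underline{M_2}/\underline{M_1})<\mu_{\max}$ gives that the first term is below $\Phi$. The second term is below $\delta\le\Phi$. The third term is below $\Phi$. Hence $\Phi(v_M')<\Phi(v_M)$.

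If $l=1$, then $\underline{M}$ is semistable and $\underline{M'}=\underline{M}$ up to shift, so $\Phi=0$ and the claim is trivial; this case is handled separately.
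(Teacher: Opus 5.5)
Your strategy is the paper's. You bound $\mu_{\max}(\underline{M'})$ and $\mu_{\min}(\underline{M'})$ via the outer terms of $0\to(\underline{M}/\underline{M_1})(\delta)\to\underline{M'}\to\underline{M_1}\to 0$ and subtract. The two bounds you finally pair, $\mu_{\max}(\underline{M'})\le\max\{\mu_{\max},\mu(\underline{M_2}/\underline{M_1})+\delta\}$ and $\mu_{\min}(\underline{M'})\ge\min\{\mu_{\max},\mu_{\min}+\delta\}$, are exactly the ones in the paper's proof.

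As written, however, the proposal does not prove them. The one non-formal input, namely the direction in which the twist $(\delta)$ moves slopes, is never derived.

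\begin{itemize}
\item You correctly obtain $\mu(N(\lambda))=\mu(N)+\lambda$ from $\dim N(\lambda)_{\le x}=\dim N_{\le x+\lambda}$. You then immediately assert that the HN factors of $(\underline{M}/\underline{M_1})(\delta)$ have slopes lowered by $\delta$. This contradicts your own formula, and your only justification is that the target inequality ``indicates'' it.
\item The ``refinement'' you then invoke (destabilizing submodules of $\underline{M'}$ must map nontrivially to $\underline{M_1}$) cannot repair this. With a genuine $-\delta$ shift, a split extension $\underline{M'}\cong(\underline{M}/\underline{M_1})(\delta)\oplus\underline{M_1}$ would give $\Phi(v_M')=\Phi(v_M)+\delta$ exactly. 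So no argument based on the sequence alone removes the term $\mu_{\max}-\mu_{\min}+\delta$.
\item What actually happens in your text is that you switch to $+\delta$ ``under the opposite shift'' because it reproduces the statement. That is circular.
\end{itemize}

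The fix is immediate and uses only what you already have.

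\begin{itemize}
\item By Proposition \ref{the_fundamental_exact_seq_of_the_hecke_transforms}, the submodule is $(\underline{M}/\underline{M_1})(\delta)$. Its HN filtration is the twist of that of $\underline{M}/\underline{M_1}$. By your shift formula, its HN slopes therefore lie in $[\mu_{\min}(\underline{M})+\delta,\ \mu(\underline{M_2}/\underline{M_1})+\delta]$.
\item This matches the definition of the Hecke transform. We have $v_M'=v_M$ on elements whose leading form lies in $\underline{M_1}$, and $v_M'=v_M-\delta$ on the others. So those leading forms drop in degree by $\delta$, and $\gr_{v_M-\delta}(M)=\gr_{v_M}(M)(\delta)$ has slope raised by $\delta$.
\item It is also consistent with the first example of the last section. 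There the transform of $\gr_v(\mathbbm{k}[x,y])(-1)\oplus\gr_v(\mathbbm{k}[x,y])(-2)$ along the first summand gives two copies of $\gr_v(\mathbbm{k}[x,y])(-1)$.
\end{itemize}

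Delete the $-\delta$ paragraph and the ``refinement'' paragraph. With the $+\delta$ bounds justified this way, the subtraction and your argument for strict decrease when $\Phi(v_M)\ge\delta$ go through exactly as in the paper.
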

\begin{proof}
    The proof is the same as that of \cite[Lemma~3.2]{CS_2018}. We get the following short exact sequence: 
    \[
    0\to (\underline{M}/\underline{M_1})(\delta)\to \underline{M'}\to \underline{M_1}\to 0.
    \]
    Let $(\underline{M'_j})_{j=1}^{l'}$ be the HNF of $\underline{M'}$. Then, we have
    \[
    \mu(\underline{M'_1})\leq \max\{\mu(\underline{M_2}/\underline{M_1})+\delta,\mu(\underline{M_1})\}
    \]
    by the Proposition \ref{sea-saw-lemma}. On the other hand, We have an exact sequence
    \[
    0\to \underline{Q_1}\to \underline{M'}/\underline{M'_{l'-1}}\to \underline{Q_2}\to 0
    \]
    for some quotients $(\underline{M}/\underline{M_1})(\delta)\to \underline{Q_1}$, $\underline{M_1}\to \underline{Q_2}$. Thus we have 
    \[
    -\mu_{\min}(\underline{M'})\leq \max\{-\mu_{\min}(\underline{M})-\delta,-\mu(\underline{M_1})\}.
    \]
    Therefore, we have
    \[
    \Phi(v_M')\leq \max\{\mu(\underline{M_2}/\underline{M_1})-\mu_{\min}(\underline{M}),\mu(\underline{M_2}/\underline{M_1})-\mu_{\max}(\underline{M})+\delta,\Phi(v_M)-\delta\}
    \]
\end{proof}
In this setting, we can show that $a_n(R)\in \delta^{-n}\cdot \mathbb{Z}$. Thus we get 
\[
\deg(\gr_{v_N}(N))=\sum_{P}\length(H^0_P(Q))a_{n-1}(R/P)-(\sum \lambda_i)a_n(R)\in \delta^{-(n-1)}\mathbb{Z}
\]
for any finite torsion--free module $N$ and $v_N\in \Val_{v,N}$ with $v_N(N\setminus \{0\})\subset \delta\cdot\mathbb{Z}$, or this can be shown by using the usual Hilbert--Samuel theory since $\dim(M_{\leq \delta n})$ is quasi-polynomial in $n$. Therefore, 
\[
\mu(\gr_{v_N}(N))\in \frac{1}{r!a_n(R)\delta^{n-1}}\mathbb{Z}
\]
for any $(N,v_N)$ with $\rank(N)\leq r$, $v_N(N\setminus \{0\})\subset \delta\cdot \mathbb{Z}$. In particular, $\Phi(v_M)$ takes discrete value if $v$ is quasi--regular. Combining this with the Proposition \ref{descend_the_energy}, we get the following theorem:
\begin{Thm}
    Let $R$ be a affine domain over an algebraically closed field $\mathbbm{k}$ and $v\in \Val_{X,x}$ be a quasi--regular valuation centred at a closed point $x$ of $\Spec(R)$, with index $\delta$. Let $M$ be a torsion--free $R$ module. Then there exists $v_M\in \Val^g_{v,M}$ such that $\Phi(v_M)\in [0,\delta)$.
\end{Thm}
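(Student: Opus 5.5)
Start from any geometric $v$-valuative function, which exists by the lemma showing $\Val^g_{v,M}\neq\emptyset$: embed $M\hookrightarrow R^{\oplus N}$ and restrict $v_f$. After rescaling we may assume its values lie in $\delta\cdot\mathbb{Z}$. If $\Phi(v_M)<\delta$ we are done. Otherwise we iterate Hecke transforms along the maximal destabilizing subsheaf. Concretely, let $(\underline{M_i})_{i=1}^l$ be the Harder--Narasimhan filtration of $\underline{M}=\gr_{v_M}(M)$ from Theorem \ref{HNF}, and let $v_M'$ be the Hecke transform of $v_M$ along $\underline{M_1}$.

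Since $\underline{M_1}$ is saturated, the quotient $\underline{M}/\underline{M_1}$ is torsion-free. Proposition \ref{the_fundamental_exact_seq_of_the_hecke_transforms} then gives $v_M'\in\Val^g_{v,M}$, again with values in $\delta\mathbb{Z}$, together with the exact sequence
\[
0\to(\underline{M}/\underline{M_1})(\delta)\to\underline{M'}\to\underline{M_1}\to0 .
\]
Proposition \ref{descend_the_energy} yields the bound on $\Phi(v_M')$, and from it the strict decrease $\Phi(v_M')<\Phi(v_M)$ whenever $\Phi(v_M)\geq\delta$. This needs care because the statement there writes the hypothesis as $\Phi(\underline{M'})\geq\delta$, whereas the intended hypothesis is on $\Phi(v_M)$. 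We therefore check each term of the maximum.
\begin{itemize}
\item $\Phi(v_M)-\delta<\Phi(v_M)$ trivially.
\item $\mu(\underline{M_2}/\underline{M_1})-\mu_{\min}<\mu_{\max}-\mu_{\min}$, because the slopes in the filtration strictly decrease. This requires $l\geq2$, which holds since $\Phi>0$.
\item $\mu(\underline{M_2}/\underline{M_1})-\mu_{\max}+\delta<\delta\leq\Phi(v_M)$.
\end{itemize}
So $\Phi$ strictly decreases at each step where $\Phi(v_M)\geq\delta$.

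To guarantee termination we use discreteness. All the graded modules $\gr_{v_N}(N)$ arising here have rank at most $r=\rank M$ and gradings in $\delta\mathbb{Z}$. By the discussion preceding the theorem, their slopes therefore lie in the lattice $\frac{1}{r!\,a_n(R)\delta^{n-1}}\mathbb{Z}$. This applies also to subquotients in the HN filtration, which are graded modules with gradings in $\delta\mathbb{Z}$. Hence $\Phi(v_M)$ takes values in this discrete subset of $\mathbb{R}_{\geq0}$. A strictly decreasing sequence in such a set that is bounded below is finite, so after finitely many Hecke transforms we reach $v_M$ with $\Phi(v_M)<\delta$.

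The main obstacle is justifying the discreteness uniformly. One must check that $a_n(R)\in\delta^{-n}\mathbb{Z}$, and that $\deg$ of any torsion-free graded $\gr_v(R)$-module graded in $\delta\mathbb{Z}$ lies in $\delta^{-(n-1)}\mathbb{Z}$, uniformly in the module. I would do this via the formula for $a_{n-1}(M)$ in the proof of Theorem \ref{generalized_RR}. Since $\lambda_i\in\delta\mathbb{Z}$ and the $a_{n-1}(R/P)$ lie in a fixed lattice, the degree lands in a fixed lattice. Alternatively, after regrading by $1/\delta$, everything becomes $\mathbb{Z}$-graded and the usual Hilbert quasi-polynomial argument applies. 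The remaining steps are the formal descent above.
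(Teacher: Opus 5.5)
Your proposal is correct and follows the paper's own argument. You start from the geometric function induced by $M\hookrightarrow R^{\oplus N}$ and Hecke-transform along $\underline{M_1}$. Proposition~\ref{descend_the_energy} gives strict decrease of $\Phi$ while $\Phi(v_M)\geq\delta$; you rightly read its hypothesis this way and check all three terms, including $l\geq 2$. You stop by discreteness of slopes.

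There are two corrections, both minor.

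First, no rescaling is needed at the start. Scaling $v_M$ would in fact destroy $v_M(am)=v(a)+v_M(m)$, and ``geometric'' already means values in $\mathbb{Z}\Gamma=\delta\mathbb{Z}$.

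Second, the explicit lattice $\frac{1}{r!\,a_n(R)\delta^{n-1}}\mathbb{Z}$ you take from the paper is false in general, and so are the ``checks'' $a_n(R)\in\delta^{-n}\mathbb{Z}$ and $\deg\in\delta^{-(n-1)}\mathbb{Z}$. Take the monomial valuation on $\mathbbm{k}[x,y]$ with $v(x)=2$, $v(y)=3$. Then $\delta=1$, $\gr_v(R)=\mathbbm{k}[x^{(2)},y^{(3)}]$ and $a_2=1/6$. The function $v_M(ae_1+be_2)=\min\{v(a),v(b)+1\}$ gives $\gr_v(R)\oplus\gr_v(R)(-1)$, with slopes $0$ and $-1$ and degree $-1/6$ for the second summand. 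But the claimed lattice for $r=2$ is $\frac{1}{2!\,a_2}\mathbb{Z}=3\mathbb{Z}$, which does not contain $-1$.

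Termination only needs some fixed lattice, and your regrading alternative supplies one.

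\begin{itemize}
\item Normalize $\delta=1$ and let $p$ be the lcm of the degrees of homogeneous generators $x_i$ of $\gr_v(R)$. Then $\gr_v(R)$ is finite over $\mathbbm{k}[x_i^{p/\deg x_i}]$, which is generated in degree $p$.
\item Let $N$ be a finitely generated graded module of dimension $k$. Split $N$ by residues of degrees mod $p$ and apply Hilbert--Serre. This gives $H_N(t)=\tilde Q(t)/(1-t^p)^k$ with $\tilde Q\in\mathbb{Z}[t^{\pm 1}]$.
\item Hence the leading coefficient $e(N)$ in $\dim N_{\leq x}=\frac{e(N)}{k!}x^k+O(x^{k-1})$ equals $\tilde Q(1)/p^k\in p^{-k}\mathbb{Z}$.
\item This justifies your claim that the $a_{n-1}(R/P)$ lie in a fixed lattice, namely $p^{-(n-1)}\mathbb{Z}$. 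It also gives $a_n(R)\in p^{-n}\mathbb{Z}$.
\item Therefore $\deg\in p^{-n}\mathbb{Z}$, and $\mu\in\frac{1}{r!\,p^n a_n(R)}\mathbb{Z}$ for rank at most $r$, with $p^n a_n(R)\in\mathbb{Z}_{>0}$.
\end{itemize}

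With this lattice your descent argument goes through unchanged.
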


\begin{defn}
    Let $R$ be an affine domain over an algebraically closed field $\mathbbm{k}$ and $v\in \Val_{x,X}$ be a quasi--monomial valuation centred at a closed point $x\in \Spec(R)$ with index $\delta$. Let $M$ be a torsion--free $R$ module. $v_M\in \Val^g_{v,M}$ is called {\bf optimal} if $\Phi(v_M)\in [0,\delta)$. 
\end{defn}
\subsection{Uniqueness of optimal valuative function}
In this subsection, we prove the following theorem: 
\begin{Thm}\label{uniqueness}
    Let $R$ be a affine domain over an algebraically closed field $\mathbbm{k}$ and $v\in \Val_{x,X}$ be a quasi--monomial valuation centred at a closed point $x\in X:=\Spec(R)$ with index $\delta$. Let $M$ be a torsion--free $R$ module and $v_M,v_M'\in \Val^g_{v,M}$ be two optimal $v$--valuative functions. Then the following statements hold:
    \begin{enumerate}
        \item If $\Phi(v_M)+\Phi(v_M')<1$, then there is $c\in \delta\mathbb{Z}$ such that $v_M'=v_M+c$.
        \item Otherwise, there exists $\underline{M'_{k'}}\subset \underline{M'}=\gr_{v_M'}(M)$ appearing in the HNF of $\underline{M'}$ such that $v_M$ is the parallel transport of the Hecke transform of $v_M$ along $\underline{M'_{k'}}$. 
    \end{enumerate}
\end{Thm}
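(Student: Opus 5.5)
We follow the strategy of Chen--Sun \cite{CS_2018}, Section 3, translated into the language of $v$-valuative functions. Set $\underline{M}=\gr_{v_M}(M)$ and $\underline{M'}=\gr_{v_M'}(M)$, and write $(\underline{M_i})_{i=0}^l$, $(\underline{M'_j})_{j=0}^{l'}$ for their Harder--Narasimhan filtrations (Theorem \ref{HNF}). The basic tool is a comparison of the two filtrations $\{M_{\geq k\delta}\}$ and $\{M'_{\geq k\delta}\}$ of $M$. Since both $v_M$ and $v_M'$ are geometric with $\gr$ finitely generated, they are equivalent up to bounded shift. Hence there is a largest $c\in\delta\mathbb{Z}$ with $v_M'\geq v_M+c$. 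After replacing $v_M'$ by $v_M'-c$ (which shifts $\underline{M'}$ by a grading twist and leaves $\Phi$ unchanged), we may assume $v_M'\geq v_M$ and that equality $v_M'(m)=v_M(m)$ holds for some $m\neq 0$.

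From this normalization we obtain a nonzero graded morphism $\underline{M}\to\underline{M'}$, namely $m\bmod M_{>\lambda}\mapsto m\bmod M'_{>\lambda}$. It is well defined because $M_{\geq\lambda}\subset M'_{\geq\lambda}$, and it is nonzero because of the element $m$ with equal values. Symmetrically, let $d\geq 0$ be the least element of $\delta\mathbb{Z}$ with $v_M\geq v_M'-d$. This gives a nonzero morphism $\underline{M'}\to\underline{M}(d)$. The morphisms are $\gr_v(R)$-linear because $v_M(am)=v(a)+v_M(m)$.

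Applying Corollary \ref{morphism_between_semistable_modules} to the induced maps between HN factors, a nonzero map $\underline{M}\to\underline{M'}$ forces $\mu_{\min}(\underline{M})\leq\mu_{\max}(\underline{M'})$, and the reverse map forces $\mu_{\min}(\underline{M'})\leq\mu_{\max}(\underline{M})+d$. We also need the additional relation coming from the fact that the degree of $\gr$ is the same for both functions up to the shift. By the Riemann--Roch formula (Theorem \ref{generalized_RR}) and $\deg=a_{n-1}-\rank\cdot a_{n-1}(R)$, the slope $\mu(\gr_{v_N}(N))$ changes linearly when $v_N$ is shifted. Comparing $\dim M/M_{\geq x}$ with $\dim M/M'_{\geq x}$ shows that $\deg\underline{M'}-\deg\underline{M}$ is controlled by the length of $M'_{\geq x}/M_{\geq x}$, which lies between $0$ and the analogous quantity for shift $d$. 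Putting these inequalities together yields $d\,a_n\lesssim(\Phi(v_M)+\Phi(v_M'))a_n$. When $\Phi(v_M)+\Phi(v_M')<1$ (here we expect to work in the normalization $\delta=1$, or more generally with $\delta$), this forces $d=0$. Then $v_M'=v_M$ after the shift, which proves (1).

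In case (2), $d>0$. We consider the submodule $\underline{K}\subset\underline{M'}$ consisting of elements whose $v_M'$-value exceeds their $v_M$-value, i.e.\ the kernel of the map $\underline{M'}\to\underline{M}(d)$ in the extremal degree, or equivalently the saturation of the image of $\underline{M}$. We aim to show that it is a member $\underline{M'_{k'}}$ of the HN filtration. Using the slope bounds together with optimality $\Phi<\delta$, any HN factor of $\underline{M'}$ must map entirely into either the image or the cokernel: a factor straddling both would produce slopes differing by at least $\delta$, contradicting $\Phi(v_M')<\delta$. It then remains to check directly from the definition of the Hecke transform, via the exact sequence of Proposition \ref{the_fundamental_exact_seq_of_the_hecke_transforms}, that the Hecke transform of $v_M'$ along this $\underline{M'_{k'}}$ coincides with $v_M$ up to parallel shift. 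This uses $d=\delta$, which we deduce from the same slope inequality since $\Phi<\delta$.

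We expect the main obstacle to be the precise degree comparison in the second step. It requires expressing $\deg\underline{M'}-\deg\underline{M}$ as a Cesàro-averaged length of $M'_{\geq x}/M_{\geq x}$, using only the averaged $a_{n-1}$ of Theorem \ref{generalized_RR} rather than genuine Hilbert polynomials. We would try first to reduce to the quasi-polynomial situation noted just before the existence theorem, where $\dim M_{\leq\delta n}$ is quasi-polynomial in $n$, and argue there exactly as in Chen--Sun.
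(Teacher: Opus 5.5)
\textbf{There is a genuine gap: the key inequality rests on a sign error.} Your outline matches the paper's proof: normalize by a shift so that $v_M\le v_M'$ with equality somewhere, take nonzero graded maps in both directions, bound the second shift $d$ by Harder--Narasimhan slopes, and identify the Hecke submodule. However, the central bound $d\le\Phi(v_M)+\Phi(v_M')$ is never established, because your reverse map carries the wrong twist.

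From $v_M\ge v_M'-d$ you get $M'_{\geq\lambda}\subset M_{\geq\lambda-d}$. In the paper's convention $M(\lambda)_t=M_{\lambda+t}$, the induced degree-preserving map is therefore $\underline{M'}\to\underline{M}(-d)$, not $\underline{M}(d)$. Since $\dim\underline{M}(-d)_{\leq x}=\dim\underline{M}_{\leq x-d}$, Theorem~\ref{generalized_RR} gives $\mu(\underline{M}(-d))=\mu(\underline{M})-d$. So the correct inequality is $\mu_{\min}(\underline{M'})\le\mu_{\max}(\underline{M})-d$. Adding it to $\mu_{\min}(\underline{M})\le\mu_{\max}(\underline{M'})$ gives $d\le\Phi(v_M)+\Phi(v_M')$ in one line. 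This is exactly the paper's argument, and it yields (1) and $d\in\{0,\delta\}$ immediately.

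Your version, $\mu_{\min}(\underline{M'})\le\mu_{\max}(\underline{M})+d$, only gets weaker as $d$ grows. The degree comparison you add to compensate gives at most $\mu(\underline{M})-d\le\mu(\underline{M'})\le\mu(\underline{M})$, which also places no upper bound on $d$. So the asserted $d\,a_n\lesssim(\Phi+\Phi')a_n$ does not follow from anything you list. The ``main obstacle'' you identify (Ces\`aro-averaged lengths, reduction to the quasi-polynomial case) is an artifact of the sign slip; no degree comparison is needed.

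\textbf{Smaller inaccuracies in part (2).}
\begin{itemize}
\item Case (2) does not force $d>0$. If $d=0$, then $v_M'=v_M+c$ and the conclusion is trivial.
\item The right submodule is $\underline{K}=\bigoplus_\lambda M_{\geq\lambda}/M'_{\geq\lambda+\delta}$. It consists of classes of elements on which $v_M=v_M'$, not where $v_M'$ exceeds $v_M$. Because $M'_{\geq\lambda+\delta}\subset M_{\geq\lambda}$, it is simultaneously the image of $\underline{M}\to\underline{M'}$ and the kernel of $\underline{M'}\to\underline{M}(-\delta)$. So no saturation is needed, and unwinding the definition shows $v_M$ is exactly the Hecke transform of $v_M'$ along $\underline{K}$.
\item $\underline{K}$ is an HN member because
\[
\mu_{\min}(\underline{K})\ge\mu_{\min}(\underline{M})>\mu_{\max}(\underline{M})-\delta\ge\mu_{\max}(\underline{M'}/\underline{K}),
\]
with $\underline{M'}/\underline{K}\subset\underline{M}(-\delta)$ torsion-free. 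This uses optimality of $v_M$, not of $v_M'$ as you wrote. Indeed, if $\underline{M'}=\gr_v(R)^{\oplus 2}$ (semistable, $\Phi(v_M')=0$) and $\underline{K}$ is one summand, $\underline{K}$ is not an HN member and the conclusion fails.
\item The paper obtains the same containment differently. It takes the maximal $k'$ with $\tilde\rho^{-1}|_D(\underline{M'_{k'}})=0$ and shows that otherwise $\Phi(v_M)\ge 1$.
\end{itemize}
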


Without loss of generality, we can assume that $\delta=1$. Thus we prove in this case. Let $\Rees_{v}(R)$ be the Rees algebra of $R$ for this valuation $v$, i.e.
\[
\Rees_v(R)=\bigoplus_{k\in \mathbb{Z}}R_kt^{-k},
\]
where $R_k:=R_k(v):=\{f\in R\mid v(f)\geq k\}$. Note that $\Rees_v(R)$ is not normal in general. Let $\tilde{M}=\Rees_{v_M}(M), \tilde{M'}=\Rees_{v_M'}(M)$ be the Rees modules, i.e.
\[
\tilde{M}=\bigoplus_{k\in \mathbb{Z}}M_kt^{-k}, \tilde{M'}=\bigoplus_{k\in \mathbb{Z}}M'_kt^{-k},
\]
where
\[
M_k:=\{m\in M\mid v_M(m)\geq k\}, M'_k:=\{m\in M\mid v_M'(m)\geq k\}.
\]
Since $v_M$ and $v_M'$ are $v$-valuation, $\tilde{M}$ and $\tilde{M'}$ are finitely generated $\Rees_v(R)$--module. This statement can be shown by standard Artin--Rees argument. Note also that $v_M$ can be recovered by $\tilde{M}$, simply $v_M(m)=\max\{k\in \mathbb{Z}\mid mt^{-k}\in \tilde{M}\}$. 

Let $\pi: \mathfrak{X}:=\Spec(\Rees_v(R))\to \mathbb{A}^1$ be the natural projection. Then $\pi$ is $\mathbb{G}_m$-equivariant morphism and there is a natural $\mathbb{G}_m$-equivariant isomorphism $\mathfrak{X}|_{\mathbb{A}^1-\{0\}}=\pi^{-1}(\mathbb{A}^1-\{0\})\cong X\times \mathbb{G}_m$, i.e. $\pi: \mathfrak{X}\to \mathbb{A}^1$ is an affine test configuration.
Let $\tilde{E}, \tilde{E'}$ be coherent sheaves on $\mathfrak{X}$ defined by $\tilde{M}, \tilde{M'}$ respectively. Then $\tilde{E}$, $\tilde{E'}$ endow the natural $\mathbb{G}_m$-actions. Furthermore, there are the natural isomorphisms
\[
\rho:\tilde{E}|_{\mathfrak{X}_{\mathbb{A}^1-\{0\}}}\cong p_1^*E\cong \tilde{E'}|_{\mathfrak{X}_{\mathbb{A}^1-\{0\}}},
\]
where $E$ is the sheaf on $X$ associated by $M$ and $p_1: \mathfrak{X}_{\mathbb{A}^1-\{0\}}=X\times \mathbb{G}_m\to X$ is the first projection. In this setting, $\rho$ can be seen as the section of $\sHom_{\mathcal{O}_\mathfrak{X}}(\tilde{E},\tilde{E'})$ on $\mathfrak{X}-\mathfrak{X}_0$. Now $D:=\mathfrak{X}_0$ is $\mathbb{G}_m$-invariant Cartier divisor on $\mathfrak{X}$. Thus there is $c\in \mathbb{Z}$ such that $\rho$ extends to $\tilde{\rho}: \tilde{E}\to \tilde{E'}(cD)$ and $\tilde{\rho}|_D\neq 0$. Now, $\tilde{E'}(cD)$ is a $\mathbb{G}_m$--equivariant sheaf on $\mathfrak{X}$ associated by $t^{-c}\tilde{M'}$, i.e. associated by the $v$-valuative function $v_M'+c$. Re--taking $v_M'$ by $v_M'+c$, we may assume that $c=0$. Similarly, we can take some $l\in \mathbb{Z}$ such that $\rho^{-1}$ extends to $\tilde{\rho}^{-1}: \tilde{E'}\to \tilde{E}(lD)$ and $\tilde{\rho}^{-1}|_D\neq 0$. Note that the existence of the extension $\tilde{\rho}^{-1}$ means that if $v_M'(m)\geq a$, we always have $v_M(m)\geq a-l$. Thus, in this case we have for all $m\in M$, $v_M'(m)\geq v_M(m)\geq v_M'(m)-l$. This means that $l$ must be non--negative. Let $(\underline{M_i})_{i=0}^N$, $(\underline{M'_j})_{j=0}^{N'}$ be the HNFs of $\underline{M}:=\gr_{v_M}(M)$, $\underline{M'}:=\gr_{v_M'}(M)$ respectively. Let $k$ be the maximal number with property $\tilde{\rho}|_D(M_k)=0$. Then there is a non-zero map
\[
\tilde{\rho}_D: \underline{M_{k+1}}/\underline{M_k}\to \underline{M'}, 
\]
i.e. $\mu_{\min}:=\mu_{\min}(v_M)\leq \mu_{k+1}:=\mu(\underline{M_{k+1}}/\underline{M_k})\leq \mu'_{\max}=\mu_{\max}(v_M')$. Similarly, we have $\mu'_{\min}\leq \mu_{\max}-l$. Thus we have
\[
0\leq l\leq \Phi(v_M)+\Phi(v_M')<2
\]
Thus, there are two cases: $l=0$ or $l=1$. 
\begin{enumerate}
    \item Consider the case when $l=0$. Note firstly that $\tilde{\rho}^{-1}\circ \tilde{\rho}: \tilde{E}\to \tilde{E}(lD)$ is the multiplication by $t^l$. In fact, if $mt^{-d}\in \tilde{M}$, since $\rho: \tilde{M}[t^{-1}]\to \tilde{M'}[t^{-1}]$ is identity map, we have $\tilde{\rho}(mt^{-d})=mt^{-d}$. If $m't^{-d}\in \tilde{M'}$, the same reasoning shows that $\tilde{\rho}(m't^{-d})=m't^{-d}\in \tilde{M}(l)[t^{-1}]$, and the only possibility of elements in $\tilde{M}(lD)$ whose restriction to $\mathfrak{X}|_{\mathbb{A}^1-\{0\}}$ is $m't^{-d}$ is $m't^{-(d-l)}\otimes t^{-l}(\in M_{d-l}t^{-(d-l)}\otimes t^{-l})$ itself, since $\tilde{M}(lD)$ is torsion--free. This shows that $\tilde{\rho}^{-1}\circ \tilde{\rho}(mt^{-d})=mt^{-(d-l)}\otimes t^{-l}$. If $l=0$, Then $\tilde{\rho}^{-1}\circ \tilde{\rho}$ is identity map, and so is $\tilde{\rho}\circ \tilde{\rho}^{-1}$. Thus, $\tilde{\rho}$ is an isomorphism. Note that this shows the (i) of the Theorem \ref{uniqueness}.
    \item Next, let us consider the case $l=1$. Since $\tilde{\rho}: \tilde{E}\to \tilde{E'}$ is injective, we treat $\tilde{E}$ as a subsheaf of $\tilde{E'}$. Note that, in this case, $v_M+1\geq v_M'\geq v_M$ holds. Thus, we have
    \begin{align*}
        \tilde{M'}/\tilde{M}=&\bigoplus_d (M'_d/M_d)t^{-d}\\
        =&\bigoplus_d \frac{M'_d/M'_{d+1}}{M_d/M'_{d+1}}t^{-d}\\
    \end{align*}
    Define $\underline{N}:=\bigoplus_d M_d/M'_{d+1}\subset \underline{M'}$. Then it is easy to show that $v_M$ is the Hecke transform of $v_M'$ along $\underline{N}$. Therefore, it suffices to show that $\underline{N}$ appears in the HNF of $\underline{M'}$.
    
    Let $\underline{M'_{k'}}$ be the maximal subsheaf appearing in the HNF of $\underline{M'}$ such that $\tilde{\rho}^{-1}|_D(\underline{M'_{k'}})=0$. Then we have $\Image(\tilde{\rho}|_D)= \underline{M'_{k'}}$. In fact, if $\Image(\tilde{\rho}|_D)\not\subset \underline{M'_{k'}}$, we have non--trivial morphism
    \[
    \underline{M}\to \underline{M'}/\underline{M'_{k'}}
    \]
    induced by $\tilde{\rho}|_D$. Thus we have $\mu_{\min}\leq \mu'_{k'+1}$. Otherwise, there is a non--trivial morphism
    \[
    \underline{M'_{k'+1}}/\underline{M'_{k'}}\to \underline{M}(-1)
    \]
    induced by $\tilde{\rho}^{-1}|_D$ and we have $\mu'_{k'+1}\leq \mu_{\max}-1$. However, these inequalities shows that 
    \[
    \Phi(v_M)\geq \mu'_{k'+1}+1-\mu_{\min}\geq 1.
    \]
    This contradicts with the assumption $\Phi(v_M)<1$. Therefore, we have $\Image(\tilde{\rho}|_D)\subset \underline{M'_{k'}}$.

    On the other hand, $\Image(\tilde{\rho}|_D)=\underline{N}$ by definition, and this is also the kernel of $\tilde{\rho}^{-1}|_D$. In fact, observation in (i) shows that $\rho^{-1}_D: \underline{M'}\to \underline{M}(-1)$ is given by natural inclusions $M'_d\subset M_{d-1}$. Thus, we have 
    \[
    \ker(\tilde{\rho}^{-1}|_D)=\bigoplus_{d}M_d/M'_{d+1}=\underline{N}.
    \]
    Therefore, we have $\underline{N}=\ker(\tilde{\rho}|_D)\subset \underline{M'_{k'}}$. It is true by definition that $\underline{M'_{k'}}\subset \ker(\tilde{\rho}|_D)$. As a result, we get (ii) of the Theorem\ref{uniqueness}.\qed
\end{enumerate}
As a consequence, we have the following theorem: 
\begin{Thm}\label{main_theorem}
    Let $R$ be an affine domain over an algebraically closed field $\mathbbm{k}$ and $v\in \Val_{X,x}$ be a quasi--regular valuation centred at a closed point $x\in X:=\Spec(R)$ with index $\delta$. For any torsion--free $R$--module $M$, the following statements hold: 
    \begin{enumerate}
        \item There exists $v_M\in \Val_{v,M}^g$ such that $\Phi(v_M)\in [0,\delta)$.
        \item The graded module of the HNF of $\gr_{v_M}(M)$ is independent of the choice of $v_M\in \Val_{v,M}^g$ with $\Phi(v_M)\in [0,\delta)$ up to twisting the grading of each direct summand.
    \end{enumerate}
\end{Thm}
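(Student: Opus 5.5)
Part (1) is exactly the existence theorem proved above: starting from any $v_M\in\Val^g_{v,M}$ (nonempty by the lemma embedding $M$ into a free module), we repeatedly replace $v_M$ by its Hecke transform along the maximal destabilizing submodule $\underline{M_1}$ of $\gr_{v_M}(M)$. This transform is again geometric by Proposition \ref{the_fundamental_exact_seq_of_the_hecke_transforms}, since $\underline{M_1}$ is saturated. By Proposition \ref{descend_the_energy}, $\Phi$ strictly decreases as long as $\Phi\ge\delta$. Since $\Phi$ takes values in the discrete set $\frac{1}{r!a_n(R)\delta^{n-1}}\mathbb{Z}_{\ge 0}$, the process terminates with $\Phi(v_M)\in[0,\delta)$. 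I will simply cite that theorem.

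For (2), rescale so that $\delta=1$ and apply Theorem \ref{uniqueness} to two optimal functions $v_M,v_M'$. In case (i) we have $v_M'=v_M+c$, so $\gr_{v_M'}(M)=\gr_{v_M}(M)(c)$ up to the sign convention. The HN filtrations then correspond by uniqueness in Theorem \ref{HNF}, and the graded pieces differ by a global twist. In case (ii), after parallel transport, $v_M$ is the Hecke transform of $v_M'$ along some HN piece $\underline{M'_{k'}}$. Proposition \ref{the_fundamental_exact_seq_of_the_hecke_transforms} then gives
\[
0\to (\underline{M'}/\underline{M'_{k'}})(1)\to \underline{M}\to \underline{M'_{k'}}\to 0 .
\]

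The key step is to show that this sequence is compatible with HN filtrations. The HN filtration of $\underline{M'}/\underline{M'_{k'}}$ is the image of $(\underline{M'_j})_{j>k'}$, with factors $\underline{M'_j}/\underline{M'_{j-1}}$; twisting by $1$ shifts every slope by the same constant, namely $a_n$ times the twist divided by $a_n$, coming from the $\lambda_i$ term in $a_{n-1}$. The HN filtration of $\underline{M'_{k'}}$ is $(\underline{M'_j})_{j\le k'}$. Optimality gives $\Phi(v_M')<1$, so $\mu'_{\max}-\mu'_{\min}<1$. Hence every shifted slope of the first piece exceeds every slope of the second, or lies on the correct side after taking the sign convention of the twist into account. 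Concatenating the two filtrations, namely the sub filtration followed by the preimages of the quotient filtration, yields a filtration of $\underline{M}$ whose factors are semistable with strictly decreasing slopes. By uniqueness in Theorem \ref{HNF} it is the HN filtration of $\underline{M}$, so the graded module of the HNF of $\underline{M}$ is $\bigoplus_j \gr_j(\underline{M'})$ with the summands for $j>k'$ twisted by $1$. This is exactly the claimed independence up to twisting individual summands.

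The main obstacle is this slope ordering. I must check carefully that the twist $(1)$ moves slopes in the direction which makes the sub-object's slopes larger. If the ordering instead comes out reversed, I will use the symmetric description, where $v_M'$ is a Hecke transform of $v_M$ with the roles swapped, or combine the strict inequality $\Phi<1$ with the bound $\mu_{\min}\le\mu'_{\max}$ obtained in the proof of Theorem \ref{uniqueness}. One of these two ways of arranging the sequence should give the strict decrease. The remaining steps are only bookkeeping of gradings.
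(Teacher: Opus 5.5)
Your proposal is correct and takes the paper's route: (1) is the Hecke-transform existence theorem, and (2) is deduced from Theorem~\ref{uniqueness}, where you spell out the HN-concatenation step that the paper (following Chen--Sun) leaves implicit. The sign check you flagged does come out the right way, and no fallback argument is needed: since $F(\lambda)_t=F_{\lambda+t}$ gives $a_{n-1}(F(\lambda))=a_{n-1}(F)+\lambda a_n(F)$, we get $\mu(F(1))=\mu(F)+1$ (consistent with the paper's use of $\mu_{\max}(\underline{M}(-1))=\mu_{\max}-1$), so the concatenated slopes $\mu'_{k'+1}+1>\dots>\mu'_{N'}+1>\mu'_1>\dots>\mu'_{k'}$ strictly decrease precisely because $\Phi(v_M')<1$.
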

\begin{Rem}
    This theorem can be viewed as an algebraic generalization of the Theorem 1.4 (I)-(III) of Chen--Sun \cite{CS_2018}. In fact, if $X$ is smooth at $x$ and $v\in \Val_{X,x}$ is the blow--up valuation, we have the existence of $v_M\in \Val_{v,M}^g$ such that $\Phi(v_M)<1$ by Theorem \ref{main_theorem} (i), and $\gr^{HNF}(\gr_{v_M}(M))$ is independent of the choice of such $v$-valuative functions up to twisting each direct summand. 
\end{Rem}
\section{Examples}
We conclude this paper with simple examples illustrating the main theorem. First example shows an example of optimal algebraic tangent cone on a smooth point along a valuation which is not a blow-up valuation. Second example shows examples of optimal algebraic tangent cone on singular points along blow-up valuations.
\subsection{Optimal algebraic tangent cone on affine plane along monomial valuations}
Let $X=\mathbb{A}^2_\mathbbm{k}=\Spec(\mathbbm{k}[x,y])$ be the affine plane over $\mathbbm{k}$. Let $v:\mathbbm{k}[x,y]\to \mathbb{R}\cup \{\infty\}$ be the monomial valuation defined by $v(x)=1$, $v(y)=2$. Consider the extension problem of $T_X\cong \mathcal{O}_X^{\oplus 2}=(\mathbbm{k}[x,y]^{\oplus 2})^\sim$. 

Firstly, since it is trivial vector bundle, we can define the trivial $v$-valuative function $v_0: \mathbbm{k}[x,y]^{\oplus 2}\to \mathbb{R}\cup \{\infty\}$ defined by $v_0(e_1)=v_0(e_2)=0$, where $e_1=1\oplus 0,e_2=0\oplus 1$. The associated graded module over $\gr_v(\mathbbm{k}[x,y])=\mathbbm{k}[x^{(1)}, y^{(2)}]$ is $\gr_v(\mathbbm{k}[x,y])^{\oplus 2}$. 

However, since $\mathcal{O}_X^{\oplus 2}$ is isomorphic to $T_X$ via $\partial_x\mapsto e_1$, $\partial_y\mapsto e_2$, the action of $\mathbb{G}_m$ on $X$ induced by the valuation $v$ gives another $v$-valuative function $v_1: \mathbbm{k}[x,y]^{\oplus 2}\to \mathbb{R}\cup \{\infty\}$ defined by $v_1(e_1)=1$, $v_1(e_2)=2$, such that the associated graded module is $\gr_v(\mathbbm{k}[x,y])(-1)\oplus \gr_v(\mathbbm{k}[x,y])(-2)$. The HNF of this module is given by 
\[
0\subset \gr_v(\mathbbm{k}[x,y])(-1)\subset \gr_v(\mathbbm{k}[x,y])(-1)\oplus \gr_v(\mathbbm{k}[x,y])(-2),
\]
and the Hecke transform of $v_1$ along $\gr_v(\mathbbm{k}[x,y])(-1)$ gives the valuation $v_0+1$. Thus, in this case, one can verify the uniqueness of the optimal algebraic tangent cone explicitly.
\subsection{Optimal algebraic tangent cone on singular points}
Let $C$ be a smooth projective curve of genus $g$ over $\mathbb{C}$ and $L$ be a very ample line bundle on $C$. Let $\phi: C\to \mathbb{P}(H^0(C,L))=\Proj(\Sym(H^0(C,L)^\lor))$ be the closed embedding defined by the complete linear series $|L|$. Let $ \mathbf{H}^0(C,L):=\Spec(\Sym(H^0(C,L)^\lor))$ and $\pi: U:=\mathbf{H}^0(C,L)-\{0\}\to \mathbb{P}(H^0(C,L))$ be the natural projection. Define $X$ as the closure of $\pi^{-1}(\phi(C))$ in $\mathbf{H}^0(C,L)$. Then $X$ is naturally isomorphic to $\Spec(R)$, where $R=\bigoplus_{m\geq 0}H^0(C,mL)$, and $X$ has a natural $\mathbb{G}_m$-action. Now, $\pi: U\to \phi(C)\cong C$ is principal $\mathbb{G}_m$-bundle corresponding to the line bundle $L^{-1}$. Thus, we have the following $\mathbb{G}_m$-equivariant exact sequence: 
\[
0\to \mathcal{O}_U\to T_U\overset{d\pi}{\to} \pi^*T_C\to 0.
\]
This short exact sequence descends to a short exact sequence on $C$, whose extension class is $-2\pi\sqrt{-1}c_1(L)\in H^1(C,\Omega_C)$:
\begin{equation}\label{Euler_sequence_of_curves}
    0\to \mathcal{O}_C\to E\to T_C\to 0.
\end{equation}
Let $v$ be a valuation on $X$ induced by the $\mathbb{G}_m$-action, $T$ be a graded module corresponding to the tangent sheaf $T_X$ with $\mathbb{G}_m$-action induced by the action on $X$, and $v_T$ be the $v$-valuative function on $T$ induced by the grading of $T$. Then $H^0(C,E(k))=T_k$ for $k\in \mathbb{Z}$. Thus, the stability of $E$ is equivalent to the stability of the graded module $T$.
\begin{itemize}[leftmargin=0pt]
    \item Firstly, consider the case when $g=0$. In this case, $X$ is smooth at the vertex $0$, and the exact sequence \ref{Euler_sequence_of_curves} is nothing but the Euler sequence of $\mathbb{P}^1$ and this shows that $T$ is semistable, i.e. $v_T$ is optimal.
    \item Secondly, consider the case when $g=1$. In this case, $X$ has a log--canonical singularity at the vertex $0$ of $X$. The exact sequence \ref{Euler_sequence_of_curves} is the unique nontrivial extension of $\mathcal{O}_C$ by itself. This shows that $v_T$ is optimal. 
    \item Finally, consider the case when $g\geq 2$. In this case, $X$ has a non--log--canonical singularity at the vertex $0$ of $X$. $E$ is unstable and the exact sequence \ref{Euler_sequence_of_curves} gives the HNF of $E$. Thus, to make an optimal $v$-valuative function, we execute processes of Hecke transforms along maximal destabilizing submodules. Before doing this procedure, we prove the following proposition: 
    \begin{prop}
        Let $R=\bigoplus_{k\geq 0}R_k$ be a graded ring of finite type on $R_0=\mathbbm{k}$, and $M=\bigoplus_{k\in \mathbb{Z}} M_k$ be a finitely generated graded $R$-module. Put $v_M$ as a valuation associated to the grading of $M$. For a saturated graded submodule $N=\bigoplus_{k\in \mathbb{Z}}N_k\subset M$, the Hecke transform of $v_M$ along $N$ is the valuation whose associated graded module is $N\oplus(M/N)(1)$.
    \end{prop}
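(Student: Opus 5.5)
Since $M$ is graded, we work with the valuation $v_M(m)=\min\{k\mid m_k\neq 0\}$, where $m=\sum_k m_k$ is the homogeneous decomposition. Then $M_{\geq k}=\bigoplus_{j\geq k}M_j$, the associated graded $\gr_{v_M}(M)$ is canonically $M$ itself, and the index is $\delta=1$. The plan is to describe the filtration $M'_{\geq k}$ of the Hecke transform $v'_M$ along $N$ explicitly in terms of the grading, and then compute its associated graded module directly. Proposition \ref{the_fundamental_exact_seq_of_the_hecke_transforms} only gives an extension
\[
0\to (M/N)(1)\to \underline{M'}\to N\to 0;
\]
here the extra grading will split it.

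\textbf{Step 1: describe $M'_{\geq k}$.} By the formula after the definition of the Hecke transform,
\[
M'_{\geq k}=p_k^{-1}(N_k)+M_{\geq k+1}=N_k\oplus\bigoplus_{j\geq k+1}M_j .
\]
So $m\in M'_{\geq k}$ exactly when $m\in M_{\geq k}$ and its degree-$k$ component lies in $N_k$. This gives $v'_M(m)=\min_j w(m_j)$, where $w(x)=k$ for $x\in N_k\setminus\{0\}$ and $w(x)=k-1$ for $x\in M_k\setminus N_k$. Each $M'_{\geq k}$ is a homogeneous submodule, since $N$ is graded.

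\textbf{Step 2: compute the graded pieces.} Taking successive quotients,
\[
M'_{\geq k}/M'_{\geq k+1}=\bigl(N_k\oplus M_{k+1}\oplus\cdots\bigr)\big/\bigl(N_{k+1}\oplus M_{k+2}\oplus\cdots\bigr)\cong N_k\oplus (M_{k+1}/N_{k+1}),
\]
and this decomposition is canonical because it is induced by the homogeneous components. Summing over $k$ gives $\underline{M'}\cong N\oplus (M/N)(1)$ as graded vector spaces, where the shift convention is $(M/N)(1)_k=(M/N)_{k+1}$.

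\textbf{Step 3: check $R$-linearity.} Let $a\in R_d$ and $m\in M'_{\geq k}$. Multiplication by $a$ sends the degree-$k$ component $N_k$ into $N_{k+d}$, and sends $M_{k+1}$ into $M_{k+d+1}$, where we reduce modulo $N_{k+d+1}$. So the product respects both summands and acts on each as the given module structure on $N$ and on $M/N$. This is the point where homogeneity matters. In the general ungraded setting, multiplying a lift of an element of $N_k$ could pick up a component outside $N$ in degree $k+d+1$, and the extension of Proposition \ref{the_fundamental_exact_seq_of_the_hecke_transforms} need not split. Here the canonical lift $N_k\subset M_k$ exists and is preserved by $R$, which gives an $R$-linear section $N\to\underline{M'}$.

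Saturation of $N$ ensures that $M/N$ is torsion-free, so $v'_M$ is again a genuine geometric valuative function, as Proposition \ref{the_fundamental_exact_seq_of_the_hecke_transforms} already notes. I expect the only real work to be verifying that the splitting is $R$-linear and consistent with the paper's shift convention. The rest is bookkeeping.
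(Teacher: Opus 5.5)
Your proposal is correct and follows the paper's proof. Both identify $M'_{\geq k}=N_k\oplus M_{\geq k+1}$ and read off $M'_{\geq k}/M'_{\geq k+1}\cong N_k\oplus (M/N)_{k+1}$. Your Step 3, which checks that the splitting is $R$-linear using homogeneity, makes explicit what the paper's one-line computation leaves implicit.
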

    \begin{proof}
        Let $v_M'$ be the Hecke transform of $v_M$ along $N$. Then we have $M_{v_M'\geq k}=M_{\geq k+1}+N_{\geq k}=N_k\oplus M_{\geq k+1}$, where $M_{\geq k}=\bigoplus_{l\geq k}M_l$ and $N_{\geq k}=\bigoplus_{l\geq k}N_l$. Thus, we have 
        \[
        \gr_{v_M'}(M)=\bigoplus_{k\in \mathbb{Z}}\frac{N_k\oplus M_{k+1}\oplus M_{\geq k+2}}{N_{k+1}\oplus M_{\geq k+2}}=N\oplus (M/N)(1).
        \]
    \end{proof}
    By construction, this direct sum decomposition is consistent with the exact sequence \ref{the_fundamental_exact_seq_of_the_hecke_transforms}, which is the following short exact sequence in this case:
    \[
    0\to (M/N)(1)\to \gr_{v_M'}(M)\to N\to 0.
    \]
    If we do the Hecke transform of $v_M'$ again by $N$, we have the valuation $v_M^{(2)}$ defined by 
    \[
    M_{v_M^{(2)}\geq k}=N_k\oplus N_{k+1}\oplus M_{\geq k+2},
    \]
    Thus, we have
    \[
    \gr_{v_M^{(2)}}(M)=N\oplus (M/N)(2).
    \]
    Doing this procedure inductively, we have $v$-valuative functions $v_M^{(l)}$ ($l\in \mathbb{Z}_{>0}$) such that 
    \[
    \gr_{v_M^{(l)}}(M)=N\oplus (M/N)(l).
    \]
    Applying this process to our setting, we have an optimal $v$-valuative function $v_T'$ such that 
    \[
    \gr_{v_T'}(T)=R\oplus (T/R)\left(\left\lfloor\frac{2g-2}{\deg(L)}\right\rfloor\right)
    \]
    Since $T/R$ is rank $1$ graded module with $\mu(T/R)=(2-2g)/\deg(L)$, we have that $v_T'$ is an optimal $v$-valuative function.
\end{itemize}
\section*{Acknowledgements}
The author would like to express his sincere gratitude to Yuji Odaka for his constant guidance, encouragement, and many insightful discussions throughout this work. He is also grateful to Eiji Inoue for valuable advice and for pointing out several related references. The author thanks Song Sun for taking the time to discuss this project during his visit to Japan and for his encouraging comments. 

The author also thanks the members of the algebraic geometry group at Kyoto University for helpful discussions and questions, in particular during seminar presentations related to this work.
\bibliographystyle{plain}
\bibliography{reference}
\vspace{5mm} \footnotesize \noindent
Contact: {\tt hada.yohei.63n@st.kyoto-u.ac.jp} \\
Department of Mathematics, Kyoto University, Kyoto 606-8285. %JAPAN \\

\end{document}